\documentclass[article]{ij4uq}              %  final version
\newcommand{\y}{\mathbf{y}}
\newcommand{\x}{\mathbf{x}}
\newcommand{\yp}{\mathbf{y}'}
\newcommand{\vn}[1]{\lVert#1\Vert}

\fancypagestyle{plain}{%
  \fancyhf{}
  \fancyhead[R]{\small \phantom{{\it International Journal for Uncertainty Quantification}, 1(1):xxx--xxx, \myyear\today}}
  \fancyfoot[R]{\vspace*{10pt}\small\bf\thepage }
  \fancyfoot[L]{\fottitle}
  }

\begin{document}

\volume{Volume 1, Number 1, \myyear\today}
\title{A multi-fidelity stochastic collocation\\ method for parabolic PDEs with
random\\ input data}
\titlehead{Multi-fidelity Stochastic Collocation}
\authorhead{Raissi \& Seshaiyer}
\author{Maziar Raissi}
\corrauthor{Padmanabhan Seshaiyer}
\corremail{pseshaiy@gmu.edu}
\corrurl{http://math.gmu.edu/~pseshaiy/}
\address{Department of Mathematical Sciences, George Mason University, 4400 University Drive, MS: 3F2, Planetary Hall, Fairfax, VA 22030, USA}

\dataO{\mydate\today}
\dataF{\mydate\today}

\abstract{Over the last few years there have been dramatic advances in our understanding of mathematical and computational models of complex systems in the presence of uncertainty. This has led to a growth in the area of uncertainty quantification as well as the need to develop efficient, scalable, stable and convergent computational methods for solving differential equations with random inputs. Stochastic Galerkin methods based on polynomial chaos expansions have shown superiority to other non-sampling and many sampling techniques. However, for complicated governing equations numerical implementations of stochastic Galerkin methods can become non-trivial. On the other hand, Monte Carlo and other traditional sampling methods, are straightforward to implement. However, they do not offer as fast convergence rates as stochastic Galerkin. Other numerical approaches are the stochastic collocation (SC) methods, which inherit both, the ease of implementation of Monte Carlo and the robustness of stochastic Galerkin to a great deal. In this work we propose a novel enhancement to stochastic collocation methods using deterministic model reduction techniques. Linear parabolic partial differential equations with random forcing terms are analysed. The input data are assumed to be represented by a finite number of random variables. A rigorous convergence analysis, supported by numerical results, shows that the proposed technique is not only reliable and robust but also efficient.
}

\keywords{collocation, stochastic partial differential equations, sparse grid, smolyak algorithm, finite element, proper orthogonal decomposition, multi-fidelity}

\maketitle

\section{Introduction}

\label{sec:Introduction}
The effectiveness of stochastic partial differential equations (SPDEs) in modelling complicated phenomena is a well-known fact. One can name wave propagation \cite{papanicolaou1971wave}, diffusion through heterogeneous random media \cite{papanicolaou1995diffusion}, randomly forced Burgers and NavierStokes equations (see e.g \cite{bensoussan1973equations,da2003ergodicity,khanin1997probability,mikulevicius2004stochastic} and the references therein) as a couple of examples. Currently, Monte Carlo is by far the most widely used tool in simulating models driven by SPDEs. However, Monte Carlo simulations are generally very expensive. To meet this concern, methods based on the Fourier analysis with respect to the Gaussian (rather than Lebesgue) measure, have been investigated in recent decades. More specifically, Cameron--Martin version of the Wiener Chaos expansion (see, e.g. \cite{cameron1947orthogonal,hida1993white} and the references therein) is among the earlier efforts. Sometimes, the Wiener Chaos expansion (WCE for short) is also referred to as the Hermite polynomial chaos expansion. The term polynomial chaos was coined by Nobert Wiener \cite{wiener1938homogeneous}. In Wieners work, Hermite polynomials served as an orthogonal basis. The validity of the approach was then proved in \cite{cameron1947orthogonal}. There is a long history of using WCE as well as other polynomial chaos expansions in problems in physics and engineering. See, e.g. \cite{crow1970relationship,orszag1967dynamical,chorin1971hermite,chorin1974gaussian}, etc. Applications of the polynomial chaos to stochastic PDEs considered in the literature typically deal with stochastic input generated by a finite number of random variables (see, e.g. \cite{sakamoto2002simulation,ghanem2003stochastic,xiu2003modeling,zhang2004efficient}). This assumption is usually introduced either directly or via a representation of the stochastic input by a truncated Karhunen--Lo\`{e}ve expansion. Stochastic finite element methods based on the Karhunen--Lo\`{e}ve expansion and Hermite polynomial chaos expansion \cite{ghanem2003stochastic,sakamoto2002simulation} have been developed by Ghanem and other authors. Karniadakis et al. generalized this idea to other types of randomness and polynomials \cite{jardak2002spectral,xiu2003modeling,xiu2002wiener}. The stochastic finite element procedure often results in a set of coupled deterministic equations which requires additional effort to be solved. To resolve this issue, stochastic collocation (SC) method was introduced. In this method one repeatedly executes an established deterministic code on a prescribed node in the random space defined by the random inputs. The idea can be found in early works such as \cite{mathelin2003stochastic,tatang1997efficient}. In these works mostly tensor products of one-dimensional nodes (e.g., Gauss quadrature) are employed. Tensor product construction despite making mathematical analysis more accessible (cf. \cite{babuvska2007stochastic}) leads to the curse of dimensionality since the total number of nodes grows exponentially fast as the number of random parameters increases. In recent years we are experiencing a surge of interest in the high-order stochastic collocation approach following \cite{xiu2005high}. The use of sparse grids from multivariate interpolation analysis, is a distinct feature of the work in \cite{xiu2005high}. A sparse grid, being a subset of the full tensor grid, can retain many of the accuracy properties of the tensor grid. While keeping high-order accuracy, it can significantly reduce the number of nodes in higher random dimensions. Further reduction in the number of nodes was pursued in \cite{agarwal2009domain,ma2009adaptive,nobile2008sparse,nobile2008anisotropic}. Applications of these numerical methods take a wide range. Here we mention some of the more representative works. It includes Burgers’ equation \cite{hou2006wiener,xiu2004supersensitivity}, fluid dynamics \cite{knio2006uncertainty,knio2001stochastic,le2002stochastic,lin2007stochastic,xiu2003modeling}, flow-structure interactions \cite{xiu2002stochastic}, hyperbolic problems \cite{chen2005uncertainty,gottlieb2008galerkin,lin2006predicting}, model construction and reduction \cite{doostan2007stochastic,ghanem2005identification,ghanem2006construction}, random domains with rough boundaries \cite{canuto2007fictitious,lin2007random,tartakovsky2006stochastic,xiu2006numerical}, etc.

Along with an attempt to reduce the number of nodes used by sparse grid stochastic collocation, one can try to employ more efficient deterministic algorithms. The current trend is to repeatedly execute a full-scale underlying deterministic simulation on prescribed nodes in the random space. However, model reduction techniques can be employed to create a computationally cheap deterministic algorithm that can be used for most of the grid points. This way we can limit the employment of an established while computationally expensive algorithm to only a relatively small number of points. A similar method is being used by K. Willcox and her team but in the context of optimization \cite{robinson2008surrogate}. ``Multifidelity", which we also adopt, is the term they employed in their work. Reduced order modelling, using proper orthogonal decompositions (POD) along with Galerkin projection, for fluid flows has seen extensive applications studied in \cite{sirovich1987turbulence,chambers1988karhunen,holmes1998turbulence,fahl2001computation,iollo2000stability,
kunisch2001galerkin,kunisch2002galerkin,henri2002stability,rowley2004model,camphouse2005boundary}. Proper orthogonal decomposition (POD) was introduce in Pearson \cite{pearson1901liii} and Hotelling \cite{hotelling1933analysis}. Since the work of Pearson and Hotelling, many have studied or used POD in a range of fields such as oceanography \cite{bjornsson1997manual}, fluid mechanics \cite{sirovich1987turbulence,holmes1998turbulence}, system feedback control \cite{ravindran2000reduced,atwell2001reduced,atwell2001proper,kepler2002reduced,atwell2004reduced,lee2005reduced}, and system modeling \cite{fahl2001computation,kunisch2002galerkin,rowley2004model,henri2005convergence}. 
In this work we analyse linear parabolic partial differential equations with random forcing terms. We propose a novel method which dramatically decreases the computational cost. The idea of the method is very simple. For each point of the stochastic parameter domain we search to see if the resulting deterministic problem is already solved for a sufficiently close problem. If yes, we use the solution to the nearby problem to create POD basis functions and we employ POD-Galerkin method to solve the original problem. We provide a rigorous convergence analysis for our proposed method. Finally, it is shown by numerical examples that the results of numerical computation are consistent with theoretical conclusions.

\section{Problem definition}
\label{sec:ProblemDef} Let $D\subset\mathbb{R}^2$ be a bounded, connected and polygonal domain and $(\Omega,\mathcal{F},P)$ denote a complete probability space with sample space $\Omega$, which corresponds to the set of all possible outcomes. $\mathcal{F}$ is the $\sigma$-algebra of events, and
$P:\mathcal{F}\rightarrow[0,1]$ is the probability measure. In this section, we consider the stochastic linear parabolic initial-boundary value problem: find a random field $u:[0,T]\times\overline{D}\times\Omega \rightarrow \mathbb{R}$, such that $P$-almost surely the following equations hold:
\begin{eqnarray}
    \label{eqn:SParabolic}
  \partial_t u(t,\mathbf{x},\omega)-\Delta u(t,\mathbf{x},\omega)=f(t,\mathbf{x},\omega) &\text{~~~~~in~~}& (0,T]\times D\times\Omega, \nonumber\\
  u(t,\mathbf{x},\omega) = 0 &\text{~~~~~on~~}& (0,T]\times \partial D\times\Omega, \\
  u(0,\mathbf{x},\omega) = 0 &\text{~~~~~on~~}& D\times\Omega\nonumber.
\end{eqnarray}
In order to guarantee the existence and uniqueness of the solution of (\ref{eqn:SParabolic}), we assume that the random forcing field $f: [0,T] \times \overline{D}\times\Omega\ni (t,\mathbf{x},\omega) \mapsto f(t,\mathbf{x},\omega) \in \mathbb{R}$ satisfies:
\begin{eqnarray}
\label{ass:f}
\int_0^T\int_{D}\mathbb{E}[f^2]d\mathbf{x}dt < +\infty \Longleftrightarrow \int_0^T\int_D f^2(t,\mathbf{x},\omega)d\mathbf{x}dt < +\infty ~~~~~~~~ P\text{-a.e. in ~} \Omega.
\end{eqnarray}
Following \cite{babuvska2007stochastic} and inspired by the truncated KL expansion \cite{loeve1977probability}, we make the assumption that the random field $f$ depends on a finite number of independent random variables. More specifically,
\begin{eqnarray}
\label{ass:FiniteNoise}
f(t,\mathbf{x},\omega) = f(t,\mathbf{x},\mathbf{y}(\omega)) & ~~~~~\text{on}& ~~[0,T]\times\overline{D}\times\Omega,
\end{eqnarray}
where $\mathbf{y}(\omega) = (y_1(\omega),\ldots,y_r(\omega))$ and $r\in\mathbb{N}_+$. Lets define the space,
\[L_P^2(\Omega) := \{\mathbf{y} = (y_1,y_2,\ldots,y_r)^T:\sum_{n=1}^{r}\int_{\Omega}|y_n(\omega)|^2dP(\omega) < \infty\},\]
where $\mathbf{y}$ denotes an $r$-dimensional random vector over $(\Omega,\mathcal{F},P)$. We also define the Hilbert space,
\[V:= L^2(0,T;H_0^1(D))\otimes L_P^2(\Omega),\]
with the inner product $(.,.)_V:V\times V\rightarrow\mathbb{R}$ given by:
\[(u.v)_V = \int_0^T\int_D\mathbb{E}[\nabla u(t,\mathbf{x},\omega).\nabla v(t,\mathbf{x},\omega)]d\mathbf{x}dt.\]
A function $u \in V$ is called a \textit{weak solution} of problem (\ref{eqn:SParabolic}) if:
\begin{eqnarray}
\label{eqn:weak}
\int_{D}\mathbb{E}[\partial_t u v]d\mathbf{x} + \int_D\mathbb{E}[\nabla u . \nabla v]d\mathbf{x} = \int_D\mathbb{E}[f v]d\mathbf{x}, ~~~~~\forall v \in H_0^1(D)\otimes L_P^2(\Omega) \text{~and~} \forall t \in (0,T],
\end{eqnarray}
and $P$-almost surely $u(0,\mathbf{x},\omega) =0$. The existence and uniqueness of the solution of problem (\ref{eqn:weak}) is a direct consequence of assumption (\ref{ass:f}) on $f$; see \cite{evanspartial}.\\

Let $\Gamma_n = y_n(\Omega)$ denote the image of the random variable $y_n$, for $n = 1,\ldots,r$, and $\Gamma = \prod_{n=1}^r \Gamma_n$. We also assume that the distribution measure of $\mathbf{y}(\omega)$ is absolutely continuous with respect to the Lebesgue measure. Thus, there exists a joint density function $\rho:\Gamma \rightarrow \mathbb{R}_+$ for $\mathbf{y} = (y_1,\ldots,y_r)$. Hence, we can use $(\Gamma,\mathcal{B}^r,\rho d\mathbf{y})$ instead of $(\Omega,\mathcal{F},P)$, where $\mathcal{B}^r$ is the $r$-dimensional Borel space. Analogous to the definitions of $L_P^2(\Omega)$ and $V$ we can define
\[L_\rho^2(\Gamma) := \{\mathbf{y}\in \Gamma:\int_{\Gamma}||\mathbf{y}||^2\rho d\mathbf{y} < \infty\},\]
and
\[V_\rho = L^2(0,T;H_0^1(D))\otimes L_\rho^2(\Gamma),\]
with inner product
\[(u.v)_{V_\rho} = \int_{\Gamma}(u(\mathbf{y}),v(\mathbf{y}))_{L^2(0,T;H_0^1(D))}\rho d\mathbf{y}.\]
where
\[(u(\mathbf{y}),v(\mathbf{y}))_{L^2(0,T;H_0^1(D))} = \int_0^T\int_D\nabla u(t,\mathbf{x},\mathbf{y}).\nabla v(t,\mathbf{x},\mathbf{y})d\mathbf{x}dt.\]
The weak solution $u\in V$ of problem (\ref{eqn:SParabolic}), using the finite dimensional noise assumption (\ref{ass:FiniteNoise}), is of the form $u(t,\mathbf{x},\omega)=u(t,\mathbf{x},y_1(\omega),\ldots,y_r(\omega))$. Therefore, the weak formulation (\ref{eqn:weak}) can be equivalently expressed as finding $u\in V_\rho$ such that $\rho$-almost everywhere in $\Gamma$, $u(0,\mathbf{x},\mathbf{y}) =0$, and
\begin{eqnarray}
\label{eqn:weak2}
\int _{\Gamma} \int_D \partial_t u v d\mathbf{x}\rho d\mathbf{y} + \int_\Gamma\int_D \nabla u .\nabla vd\mathbf{x}\rho d\mathbf{y}= \int_\Gamma\int_D f v d\mathbf{x}\rho d\mathbf{y}, ~~~~~\forall v \in H_0^1(D)\otimes L_\rho^2(\Gamma) \text{~and~} \forall t \in (0,T].
\end{eqnarray}
For each fixed $t\in (0,T]$, the solution $u$ to (\ref{eqn:weak2}) can be viewed as a mapping $u:\Gamma \rightarrow H_0^1(D)$. In order to emphasize the dependence on the variable $\mathbf{y}$, we use the notations $u(\mathbf{y})$ and $f(\mathbf{y})$. Hence, we achieve the following equivalent settings: find $u(\mathbf{y})\in H_0^1(D)$ such that $\rho$-almost everywhere in $\Gamma$, $u(0,\mathbf{x},\mathbf{y}) = 0$, and
\begin{eqnarray}
\label{eqn:weak3}
\int_D \partial_t u(\mathbf{y}) v d\mathbf{x} + \int_D \nabla u(\mathbf{y}) .\nabla v d\mathbf{x}= \int_D f(\mathbf{y}) v d\mathbf{x}, ~~~~~\forall v \in H_0^1(D) \text{~and~} \forall t \in (0,T], ~\rho\text{-a.e. in } \Gamma.
\end{eqnarray}
Note that there may exist a $\rho d\mathbf{y}$-zero measure set $\mathcal{N}_\rho \subset \Gamma$ in which (\ref{eqn:weak3}) is not satisfied. Therefore, from a computational perspective, if a point $\mathbf{y}\in \mathcal{N}_\rho$ is chosen, the resulting solution of (\ref{eqn:weak3}) is not the true solution of the original equation. However, the computation of the moments of the solution does not suffer from this disadvantage.

\section{Multi-fidelity Collocation method}
\label{sec:MFCollocationMethod}
In this section, we apply our multi-fidelity stochastic collocation method to the weak form (\ref{eqn:weak3}). Let $V_{\rho,h}$ be a finite dimensional subspace of $V_\rho$ given by $V_{\rho,h} = L^2(0,T;H_h(D))\otimes \mathcal{P}_\mathbf{p}(\Gamma)$, where $H_h(D) \subset H_0^1(D)$ is a standard finite element space and $\mathcal{P}_\mathbf{p}(\Gamma)\subset L_\rho^2(\Gamma)$ is the span of tensor product polynomials with degree
at most $\mathbf{p} = (p_1,\ldots,p_r)$. The goal is to find a numerical approximation to the solution of (\ref{eqn:weak3}) in the finite dimensional subspace $V_{\rho,h}$. Choose $\eta > 0$ to be a small real number. The procedure for solving (\ref{eqn:weak3}) is divided into two parts:
\begin{enumerate}
\item Fix $\mathbf{y}\in\Gamma$, and search the $\eta$-neighbourhood $B_\eta(\mathbf{y})\subset\Gamma$ of $\mathbf{y}$. If problem (\ref{eqn:weak3}) is not already solved for any nearby problem with $\mathbf{y}' \in B_\eta(\mathbf{y})$, solve problem (\ref{eqn:weak3}) using a regular backward Euler finite element method at $\mathbf{y}$ and let $\mathbf{y}' = \mathbf{y}$. In contrast, if equation (\ref{eqn:weak3}) is already solved for some points in $B_\eta(\mathbf{y})$, choose one of them and call it $\mathbf{y}'$. In either case, use the solution at $\mathbf{y}'\in B_\eta(\mathbf{y})$ to find a small number $d\in\mathbb{N}_+$ of suitable orthonormal basis functions $\{\psi_j(\mathbf{y}')\}_{j=1}^d\subset H_h(D)$ using proper orthogonal decomposition (POD) method. Now use Galerkin projection on to the subspace $X^d(\mathbf{y}') = \text{span}\{\psi_j(\mathbf{y}')\}_{j=1}^d$ to find
\[\{u_d^m(\mathbf{y})\}_{m=1}^N\subset X^d(\mathbf{y}') \subset H_h(D),\]
such that
\begin{eqnarray}
\label{part1}
(u_d^m,v_d)+k(\nabla u_d^m,\nabla v_d) = k(f^m(\mathbf{y}),v_d)+(u_d^{m-1},v_d),~~~\forall v_d\in X^d(\mathbf{y}'), ~~~m=1,\ldots,N,
\end{eqnarray}
and $u_d^0=0$, where $N\in\mathbb{N_+}$ is the number of time steps, and $k = T/N$ denotes the time step increments. It is worth mentioning that $(.,.)$ denotes the $L^2$-inner product. Note that we are employing a backward Euler scheme to discretize time.
\item Collocate (\ref{part1}) on zeros of suitable orthogonal polynomials and build the interpolated discrete solution
\begin{eqnarray}
\label{part2}
\{u_{d,\mathbf{p}}^m\}_{m=1}^N\subset H_h(D)\otimes \mathcal{P}_\mathbf{p}(\Gamma),
\end{eqnarray}
using
\begin{eqnarray}
\label{CollocationScheme}
u_{d,\mathbf{p}}^m(x,\mathbf{y}) = \mathcal{I}_\mathbf{p}u_d^m(x,\mathbf{y}) = \sum_{j_1=1}^{p_1+1}\cdots\sum_{j_r=1}^{p_r+1}u_d^m(x,y_{j_1},\ldots,y_{j_r})(l_{j_1}(\mathbf{y})\otimes\cdots\otimes l_{j_r}(\mathbf{y})), ~~~ m=1,\ldots,N,
\end{eqnarray}
\end{enumerate}
where the functions $\{l_{j_k}\}_{k=1}^r$ can be taken as Lagrange polynomials.  Using this formula, as described in \cite{babuvska2007stochastic}, mean value and variance of $u$ can also be easily approximated.
\subsection{Proper orthogonal decomposition}
In this section, we choose a fixed $\yp \in B_\eta(\y) \subset \Gamma$ and drop the dependence of equation (\ref{eqn:weak3}) on $\yp$, for notational conveniences. Therefore, we consider the problem of finding $w\in H_0^1(D)$ such that:
\begin{eqnarray}
\label{eqn:DeterministicHeat}
(w_t,v) + (\nabla w, \nabla v) = (g,v),~~~~\forall v \in H_0^1(D),
\end{eqnarray}
and $w(\mathbf{x},0) = 0$, for all $\mathbf{x}\in D$. Note that $g = f(\yp)$. Let $t_m=mk, k = 0,\ldots,N$, where $k$ denotes the time step increments. Assume $\mathfrak{T}_h$ to be a uniformly regular family of triangulation of $\overline{D}$ (see \cite{vidar1997galerkin,ciarlet1978finite}). The finite element space is taken as
\[H_h(D) = \{v_h \in H_0^1(D)\cap C^0(D): v_h|_K\in P_s(K),~~\forall K\in\mathfrak{T}_h\},\]
where $s\in\mathbb{N}_+$ and $P_s(K)$ is the space of polynomials of degree $\leq s$ on $K$. Write $w^m(\mathbf{x}) = w(\mathbf{x},t_m)$, and let $w_h^m$ denote the fully discrete approximation of $w$ resulting from solving the problem of finding $w_h^m\in H_h(D)$ such that $w_h^0(\mathbf{x})=0$ and for $m=1,\ldots,N$,
\begin{eqnarray}
\label{eqn:DeterministicHeatFiniteElement}
(w_h^m,v_h)+k(\nabla w_h^m,\nabla v_h) = k(g^m,v_h)+(w_h^{m-1},v_h),~~~\forall v_h\in H_h(D), ~~~m=1,\ldots,N.
\end{eqnarray}
It is easy to prove that problem (\ref{eqn:DeterministicHeatFiniteElement}) has a unique solution $w_h^m\in H_h(D)$, provided that $g^m\in L^2(D)$ (see \cite{vidar1997galerkin}). One can also show that if $w_t \in H^{s+1}(D)$ and $w_{tt}\in L^2(D)$, the following error estimates hold:
\begin{eqnarray}
\label{FEErrorEstimates}
||w^m -w_h^m||_0\leq Ch^{s+1}\int_0^{t_m}||w_t||_{s+1}dt + Ck\int_0^{t_m}||w_{tt}||_0dt, ~~~m=1,\ldots,N,
\end{eqnarray}
where $||.||_s$ denotes the $H^s(D)$-norm and $C$ indicates a positive constant independent of the spatial and temporal mesh sizes, possibly different at distinct occurrences.\\

For the so-called snapshots $U_i := w_h^{m_i} \in H_h(D)$, $i=1,\ldots,\ell$, where $1 \leq m_1 < m_2 < \cdots < m_\ell\leq N$, let
\[\mathcal{V} = \text{span}\{U_1,\ldots,U_\ell\}.\]
Assume at least one of $U_i$ is non-zero, and let $\{\psi_j\}_{j=1}^l$ be an orthonormal basis of $\mathcal{V}$ with $l = \text{dim}\mathcal{V}$. Therefore, for each $U_i\in \mathcal{V}$ we will have:
\begin{eqnarray}
U_i = \sum_{j=1}^l(U_i,\psi_j)_{H_0^1(D)}\psi_j,
\end{eqnarray}
where $(U_i,\psi_j)_{H_0^1(D)} = (\nabla u_h^{m_i},\nabla \psi_j)$.
\begin{defi}
The POD method consists of finding an orthonormal basis $\psi_j ~(j=1,2,\ldots,d)$ such that for every $d = 1,\ldots,l$, the following problem is solved
\begin{eqnarray}
\label{minPOD}
\min_{\{\psi_j\}_{j=1}^d} \frac{1}{\ell}\sum_{i=1}^\ell \lVert U_i - \sum_{j=1}^d(U_i,\psi_j)_{H_0^1(D)}\psi_j\rVert_{H_0^1(D)}^2.
\end{eqnarray}
A solution $\{\psi_j\}_{j=1}^d$ of this minimization problem is known as a POD basis of rank $d$.
\end{defi}
Let us introduce the correlation matrix $K = (K_{ij})_{i,j=1}^\ell \in \mathbb{R}^{\ell\times\ell}$ given by
\begin{eqnarray}
K_{ij} = \frac{1}{\ell}(U_i,U_j)_{H_0^1(D)}.
\end{eqnarray}
The following proposition (see \cite{sirovich1987turbulence,kunisch2001galerkin,kunisch2002galerkin}) solves problem (\ref{minPOD}).
\begin{prop}
Let $\lambda_1 \geq \lambda_2 \geq \cdots \geq \lambda_l > 0$ denote the positive eigenvalues of $K$ and $\bm{v}_1, \bm{v}_2, \ldots,\bm{v}_l$ the associated orthonormal eigenvectors. Then a POD basis of rank $d\leq l$ is given by
\begin{eqnarray}
\psi_i = \frac{1}{\sqrt{\lambda_i}}\sum_{j=1}^\ell(\bm{v}_i)_jU_j, ~~~ i=1,\ldots,d,
\end{eqnarray}
where $(\bm{v}_i)_j$ denotes the $j$-th component of the eigenvector $\bm{v}_i$. Furthermore, the following error formula holds:
\begin{eqnarray}
 \frac{1}{\ell}\sum_{i=1}^\ell \lVert U_i - \sum_{j=1}^d(U_i,\psi_j)_{H_0^1(D)}\psi_j\rVert_{H_0^1(D)}^2 = \sum_{j=d+1}^l \lambda_j.
\end{eqnarray}
\end{prop}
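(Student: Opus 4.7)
The plan is to reduce \eqref{minPOD}, which is posed over orthonormal systems in $H_0^1(D)$, to a finite-dimensional spectral problem that is solved explicitly by the eigendecomposition of $K$. I would begin by verifying that the claimed $\{\psi_i\}_{i=1}^d$ form an orthonormal system in $H_0^1(D)$: using $(U_j,U_m)_{H_0^1(D)}=\ell K_{jm}$, the eigenrelation $K\bm{v}_k=\lambda_k\bm{v}_k$, and $\bm{v}_i^T\bm{v}_k=\delta_{ik}$, a direct computation gives
\[
(\psi_i,\psi_k)_{H_0^1(D)}=\frac{1}{\sqrt{\lambda_i\lambda_k}}\sum_{j,m}(\bm{v}_i)_j(\bm{v}_k)_m(U_j,U_m)_{H_0^1(D)}=\delta_{ik}
\]
(up to the overall normalization convention carried by $K$).

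Next, since every $U_i\in\mathcal{V}$, orthogonal projection of any competitor basis onto $\mathcal{V}$ can only decrease the objective, so the minimization may be restricted to $\{\psi_j\}\subset\mathcal{V}$. The Pythagorean identity in $H_0^1(D)$ then rewrites the cost as
\[
\frac{1}{\ell}\sum_{i=1}^\ell\lVert U_i\rVert_{H_0^1(D)}^2-\sum_{j=1}^d J(\psi_j),\qquad J(\psi):=\frac{1}{\ell}\sum_{i=1}^\ell\bigl|(U_i,\psi)_{H_0^1(D)}\bigr|^2,
\]
so that minimizing the cost is equivalent to maximizing $\sum_{j=1}^d J(\psi_j)$ over orthonormal $d$-tuples in $\mathcal{V}$.

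The crucial step is to recognize $J$ as a Rayleigh quotient tied to $K$. Define $R:\mathbb{R}^\ell\to\mathcal{V}$ by $R\bm{c}=\sum_j c_j U_j$; its $H_0^1$-adjoint satisfies $(R^*\psi)_j=(U_j,\psi)_{H_0^1(D)}$, and one checks $R^*R=\ell K$. Hence $\ell J(\psi)=(RR^*\psi,\psi)_{H_0^1(D)}$, so by Courant--Fischer the constrained maximum equals the sum of the top $d$ eigenvalues of $\tfrac{1}{\ell}RR^*$; since $R^*R$ and $RR^*$ share their nonzero spectra, this maximum is $\sum_{j=1}^d\lambda_j$. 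Applying $R$ to $K\bm{v}_i=\lambda_i\bm{v}_i$ and rescaling identifies the optimizers with the stated $\psi_i$. The error identity then follows by subtraction: $\frac{1}{\ell}\sum_i\lVert U_i\rVert_{H_0^1(D)}^2=\mathrm{tr}(K)=\sum_{j=1}^l\lambda_j$, so the residual equals $\sum_{j=d+1}^l\lambda_j$.

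The main obstacle I expect is not algebraic but bookkeeping: the snapshots $U_i$ may be linearly dependent, so $K$ has rank $l\le\ell$ with a nontrivial null space, and one must pass carefully between eigenvectors of $R^*R$ on $\mathbb{R}^\ell$ and of $RR^*$ on $\mathcal{V}$ without dividing by zero. This is handled cleanly by restricting $R$ to the orthogonal complement of its kernel, whose image is precisely $\mathcal{V}$, and invoking the SVD of $R$ there; the zero eigenvectors of $K$ correspond exactly to the redundancies among the $U_i$ and contribute nothing to the POD expansions.
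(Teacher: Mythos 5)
Your argument is correct, and it is worth noting that the paper itself offers no proof of this proposition at all: it is quoted from the literature (Sirovich, Kunisch--Volkwein), so what you have written is essentially the standard argument from those sources, made self-contained. Your key reduction --- introducing the snapshot operator $R\bm{c}=\sum_j c_jU_j$, computing $R^*R=\ell K$, rewriting the cost via the Pythagorean identity as $\frac{1}{\ell}\sum_i\lVert U_i\rVert^2_{H_0^1(D)}-\sum_{j=1}^d J(\psi_j)$, and maximizing the trace-type functional $\sum_j(\tfrac{1}{\ell}RR^*\psi_j,\psi_j)$ over orthonormal systems --- is exactly the classical route; strictly speaking the step you call Courant--Fischer is Ky Fan's trace maximum principle, but that is a naming quibble, not a gap. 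Two small points deserve attention. First, your preliminary reduction ``projecting a competitor basis onto $\mathcal{V}$ can only decrease the objective'' is glossed: projections of an orthonormal family need not remain orthonormal; however, this reduction is unnecessary, since the Ky Fan argument applies directly to the finite-rank nonnegative self-adjoint operator $\tfrac{1}{\ell}RR^*$ on all of $H_h(D)$, whose range is $\mathcal{V}$, so nothing is lost. Second, your normalization caveat is well taken and in fact points to a slip in the statement as printed: with $K_{ij}=\tfrac{1}{\ell}(U_i,U_j)_{H_0^1(D)}$ one gets $(\psi_i,\psi_k)_{H_0^1(D)}=\ell\,\delta_{ik}$ for the displayed $\psi_i$, so the correctly normalized basis is $\psi_i=\tfrac{1}{\sqrt{\ell\lambda_i}}\sum_{j=1}^{\ell}(\bm{v}_i)_jU_j$ (as in the cited references); with that factor restored, your identification of the optimizers, the trace identity $\tfrac{1}{\ell}\sum_i\lVert U_i\rVert^2_{H_0^1(D)}=\operatorname{tr}(K)=\sum_{j=1}^l\lambda_j$, and hence the error formula $\sum_{j=d+1}^l\lambda_j$ all go through exactly as you say, including your handling of the rank-deficient case $l<\ell$ via the SVD of $R$ restricted to the complement of its kernel.
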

Let $X^d := \text{span}\{\psi_1,\psi_2,\ldots,\psi_d\}$, and consider the problem of finding $w_d^m\in X^d \subset H_h(D)$ such that $w_d^0(\mathbf{x})=0$ and for $m=1,\ldots,N$,
\begin{eqnarray}
\label{eqn:DeterministicHeatPOD}
(w_d^m,v_d)+k(\nabla w_d^m,\nabla v_d) = k(g^m,v_d)+(w_d^{m-1},v_d),~~~\forall v_d\in X^d \subset H_h(D), ~~~m=1,\ldots,N.
\end{eqnarray}
\begin{rema}
If $\mathfrak{T}_h$ is a uniformly regular triangulation and $H_h(D)$ is the the space of piecewise linear functions, the total degrees of freedom for problem (\ref{eqn:DeterministicHeatFiniteElement}) is $N_h$, where $N_h$ is the number of vertices of triangles in $\mathfrak{T}_h$, while the total of degrees of freedom for problem (\ref{eqn:DeterministicHeatPOD}) is $d$ (where $d \ll l \ll \ell \ll N_h$).
\end{rema}
The following proposition, proved in \cite{luo2009finite}, gives us an error estimate on the solution of problem (\ref{eqn:DeterministicHeatPOD}).
\begin{prop}
\label{prop:SinglePOD}
If $w_h^m\in H_h(D)$ is the solution of problem (\ref{eqn:DeterministicHeatFiniteElement}), $w_d^m\in X^d \subset H_h(D)$ is the solution of problem (\ref{eqn:DeterministicHeatPOD}), $k = O(h)$, $\ell^2 = O(N)$, and snapshots are equably taken, then for $m = 1,2,\ldots,N$, the following estimates hold:
\begin{equation}
\label{PODErrorEstimates}
\lVert w_h^m-w_d^m\lVert_0 + \frac{1}{\ell}\sum_{j=1}^\ell \lVert\nabla(w_h^{m_j}-w_d^{m_j})\lVert_0 \leq C\left(k^{1/2}\sum_{j=d+1}^l\lambda_j\right)^{1/2}, ~~m = m_i, ~~i=1,\ldots,\ell;
\end{equation}
\begin{equation}
\lVert w_h^m-w_d^m\lVert_0 + \frac{1}{\ell}\left[\lVert \nabla (w_h^m-w_d^m)\lVert_0 + \sum_{j=1}^{\ell-1} \lVert\nabla(w_h^{m_j}-w_d^{m_j})\lVert_0\right] \leq C\left(k^{1/2}\sum_{j=d+1}^l\lambda_j\right)^{1/2}+Ck, ~m\neq m_i.\nonumber
\end{equation}
\end{prop}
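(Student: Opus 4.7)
The plan is to adapt the standard error-splitting argument used for finite-element/POD analysis of parabolic problems. Introduce the $H_0^1(D)$-orthogonal projection $P_d: H_h(D)\to X^d$ and decompose
\[
w_h^m - w_d^m = \rho^m + \theta^m,\qquad \rho^m := w_h^m - P_d w_h^m,\qquad \theta^m := P_d w_h^m - w_d^m \in X^d.
\]
The crucial payoff of projecting in the $H_0^1$-inner product is that $(\nabla\rho^m,\nabla v_d)=0$ for all $v_d\in X^d$, which will decouple $\theta^m$ from the gradient part of the projection error in the discrete error equation.

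First I would control the projection error $\rho^m$ using the error formula from the previous proposition. At a snapshot index $m=m_i$ one has $w_h^{m_i}=U_i$, so $\|\nabla \rho^{m_i}\|_0^2 = \|U_i-P_dU_i\|_{H_0^1(D)}^2$ and therefore $\frac{1}{\ell}\sum_{i=1}^\ell \|\nabla \rho^{m_i}\|_0^2 = \sum_{j=d+1}^l \lambda_j$, with the $L^2$-part picked up by Poincaré. For $m\neq m_i$ I would interpolate between the two adjacent snapshots, writing $w_h^m - w_h^{m_i}$ as a telescoping sum of the one-step increments and estimating each increment through the scheme (\ref{eqn:DeterministicHeatFiniteElement}); this produces exactly the extra $Ck$ term in the off-snapshot estimate.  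Next, subtracting (\ref{eqn:DeterministicHeatPOD}) from (\ref{eqn:DeterministicHeatFiniteElement}) with test functions restricted to $X^d$ and using $(\nabla\rho^m,\nabla v_d)=0$ yields the discrete error equation
\[
(\theta^m - \theta^{m-1}, v_d) + k(\nabla \theta^m, \nabla v_d) = -(\rho^m - \rho^{m-1}, v_d), \qquad \forall v_d \in X^d.
\]
Testing with $v_d=\theta^m$, applying the identity $2(\theta^m-\theta^{m-1},\theta^m) = \|\theta^m\|_0^2 - \|\theta^{m-1}\|_0^2 + \|\theta^m-\theta^{m-1}\|_0^2$ together with Young's inequality, telescoping in $m$, and using discrete Grönwall, yields
\[
\|\theta^m\|_0^2 + k\sum_{n=1}^m \|\nabla \theta^n\|_0^2 \leq C\sum_{n=1}^m \|\rho^n - \rho^{n-1}\|_0^2.
\]

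Finally I would estimate $\sum_n \|\rho^n-\rho^{n-1}\|_0^2 = \sum_n \|(I-P_d)(w_h^n-w_h^{n-1})\|_0^2$ by combining the POD error formula with the discrete time regularity $w_h^n-w_h^{n-1}=O(k)$ obtained from an energy estimate on the finite-element scheme; the hypotheses $k = O(h)$ and $\ell^2 = O(N)$ together with equable spacing of snapshots are exactly what is needed to convert the $N$ time-step increments into a factor proportional to $k^{1/2}\sum_{j>d}\lambda_j$, after distributing among the $\ell$ snapshot-windows. Recombining $w_h^m-w_d^m = \rho^m+\theta^m$ by the triangle inequality yields the claimed bounds, with the extra $Ck$ off-snapshots inherited from the projection-error step. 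I expect the main obstacle to be precisely this last counting step: reconciling the snapshot-spacing assumption $\ell^2 = O(N)$ with the CFL-type balance $k = O(h)$ to extract the specific $k^{1/4}$-effective scaling inside the square root, while simultaneously controlling the non-snapshot-time contributions; getting the constants to line up requires careful use of both the equable-snapshot hypothesis and the fact that $P_d$ contracts the $H_0^1$-seminorm on $\mathcal{V}$ optimally.
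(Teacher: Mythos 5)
First, a framing point: the paper does not prove this proposition at all --- it is quoted verbatim from \cite{luo2009finite} (``The following proposition, proved in \cite{luo2009finite}, \ldots''), so your attempt has to stand on its own. Your skeleton (split $w_h^m-w_d^m=\rho^m+\theta^m$ with the $H_0^1$-orthogonal projection $P_d$ onto $X^d$, kill $(\nabla\rho^m,\nabla v_d)$, derive $(\theta^m-\theta^{m-1},v_d)+k(\nabla\theta^m,\nabla v_d)=-(\rho^m-\rho^{m-1},v_d)$) is the standard and correct starting point, and is in the spirit of the cited proof. But the two quantitative claims that would turn it into the stated estimate are not established and, as written, do not hold. (i) The energy/Gr\"onwall step does not give $\lVert\theta^m\rVert_0^2+k\sum_{n\le m}\lVert\nabla\theta^n\rVert_0^2\le C\sum_n\lVert\rho^n-\rho^{n-1}\rVert_0^2$ with $C$ independent of $k$: to keep the discrete Gr\"onwall factor bounded the Young parameter must scale like $k$, which leaves a factor $C/k$ in front of $\sum_n\lVert\rho^n-\rho^{n-1}\rVert_0^2$. (ii) More fundamentally, your plan extracts the factor $\sum_{j>d}\lambda_j$ from $\sum_n\lVert(I-P_d)(w_h^n-w_h^{n-1})\rVert_0^2$, but the POD error formula controls $(I-P_d)$ only on the $\ell$ snapshots $U_i=w_h^{m_i}$, only in the $H_0^1$-seminorm, and only on average over $i$. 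The $N\gg\ell$ one-step increments $w_h^n-w_h^{n-1}$ are not snapshots, so for them $(I-P_d)$ yields no smallness in terms of the $\lambda_j$, only boundedness; combining boundedness with the $O(k)$ discrete time regularity and the $1/k$ from (i) gives an $O(1)$ bound, i.e.\ no convergence. The $\lambda_j$-dependence must enter through $(I-P_d)w_h^{m_i}$ at snapshot levels, and the hypotheses $\ell^2=O(N)$, $k=O(h)$ and equable spacing are exactly what convert that averaged, $H_0^1$-level information into the pointwise-in-$m$, $L^2$-level bound with the peculiar $k^{1/2}$ inside the parentheses. That is precisely the step you explicitly defer (``I expect the main obstacle to be \ldots''), so what you have is a plausible outline with the crux missing, not a proof.

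A second concrete problem is the off-snapshot claim. With $\ell^2=O(N)$ and equable snapshots, adjacent snapshots are $\sim N/\ell\sim\sqrt N$ time steps apart, so telescoping one-step increments of size $O(k)$ between $w_h^m$ and the nearest $w_h^{m_i}$ gives a deviation of order $(N/\ell)k\sim\sqrt{Tk}=O(k^{1/2})$, not the ``exactly $Ck$'' you assert; note also that in the second estimate the gradient error at the non-snapshot time enters only with weight $1/\ell$, which is part of how \cite{luo2009finite} makes the bookkeeping close. To repair the argument you would need to redo the $\rho$/$\theta$ accounting so that the right-hand side is expressed through $(I-P_d)$ evaluated at snapshots (plus genuinely $O(k)$ remainders weighted as in the statement), and then balance using $k=O(h)$ and $\ell^2=O(N)$ --- or simply do as the paper does and invoke \cite{luo2009finite}.
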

Combining (\ref{FEErrorEstimates}) and (\ref{PODErrorEstimates}) we get the following result.
\begin{prop}
Under assumptions of proposition \ref{prop:SinglePOD}, the error estimate between the solutions of problems (\ref{eqn:DeterministicHeat}) and (\ref{eqn:DeterministicHeatPOD}), for $m = 1,2,\ldots,N$, is given by:
\begin{eqnarray}
\label{mainPODerrorEstimate}
\lVert w^m-w_d^m\lVert_0 \leq Ch^{s+1}+Ck+C\left(k^{1/2}\sum_{j=d+1}^l\lambda_j\right)^{1/2}
\end{eqnarray}
\end{prop}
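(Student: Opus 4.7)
The plan is to derive the combined estimate by a straightforward triangle-inequality argument, splitting the total error into a finite-element part and a POD part:
\[\lVert w^m - w_d^m\rVert_0 \leq \lVert w^m - w_h^m\rVert_0 + \lVert w_h^m - w_d^m\rVert_0,\]
and then bounding each piece using an estimate already recorded in the excerpt. This matches the author's own remark that the proposition follows by combining (\ref{FEErrorEstimates}) and (\ref{PODErrorEstimates}).

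First I would apply (\ref{FEErrorEstimates}) to the first term. Under the stated regularity $w_t \in H^{s+1}(D)$ and $w_{tt}\in L^2(D)$ on $[0,T]$, the time integrals $\int_0^{t_m}\lVert w_t\rVert_{s+1}\,dt$ and $\int_0^{t_m}\lVert w_{tt}\rVert_0\,dt$ are bounded uniformly in $m$ by constants depending only on $T$, $g = f(\yp)$ and $D$, so they can be absorbed into the generic constant $C$. This yields a bound of the form $Ch^{s+1}+Ck$ for the finite element error.

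Next I would invoke Proposition \ref{prop:SinglePOD} on the POD term. For a snapshot index $m=m_i$, the first inequality in (\ref{PODErrorEstimates}) gives directly the bound $C\bigl(k^{1/2}\sum_{j=d+1}^l\lambda_j\bigr)^{1/2}$, after discarding the nonnegative gradient sum on the left. For $m\neq m_i$, the second inequality of the proposition produces an additional $Ck$ term which is of the same order as the time-discretization error already present in the finite element bound, and so can be merged into the $Ck$ contribution. Summing the two bounds then yields the stated inequality, with $C$ taking a possibly different value at each occurrence but independent of $h$, $k$, $d$, $\ell$ and $m$.

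There is no deep analytic obstacle, since both ingredients are stated as previous results. The only real care is bookkeeping: I must check that the hypotheses of Proposition \ref{prop:SinglePOD} (namely $k = O(h)$, $\ell^2 = O(N)$, and equably spaced snapshots) coexist consistently with the regularity hypotheses required by (\ref{FEErrorEstimates}), and that the constants arising from $\int_0^T\lVert w_t\rVert_{s+1}\,dt$ and $\int_0^T\lVert w_{tt}\rVert_0\,dt$ remain finite under the data assumption (\ref{ass:f}) lifted from $f$ to $g$. Once this is verified, the final bound follows immediately by addition.
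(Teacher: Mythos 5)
Your proposal is correct and is essentially identical to the paper's argument: the paper obtains (\ref{mainPODerrorEstimate}) precisely by the triangle inequality through $w_h^m$, using (\ref{FEErrorEstimates}) for $\lVert w^m-w_h^m\rVert_0$ and Proposition \ref{prop:SinglePOD} for $\lVert w_h^m-w_d^m\rVert_0$, absorbing the time integrals of $\lVert w_t\rVert_{s+1}$ and $\lVert w_{tt}\rVert_0$ and the extra $Ck$ from the non-snapshot case into the generic constants. Your bookkeeping remarks about the $m\neq m_i$ case and the uniformity of the constants in $m$ are exactly the (unstated) details behind the paper's one-line combination.
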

Now, with a slight misusing of notation, we assume that the function $f$ is given by $f = f(\y)$, where $f\in C(\Gamma;C(0,T;L^2(D)))$ is the function employed in equation (\ref{eqn:weak3}), and consider the following problem: find $u\in H_0^1(D)$ such that $u(\x,0)=0$, for all $\x \in D$, and
\begin{eqnarray}
\label{eqn:DeterministicHeatPurturbed}
(u_t,v) + (\nabla u, \nabla v) = (f,v),~~~~\forall v \in H_0^1(D).
\end{eqnarray}
\begin{rema}
\label{LipschitzAssumtion}
Note that since $\lVert\y - \yp\lVert < \eta$ and under the assumption that $f\in C(\Gamma;C(0,T;L^2(D)))$ is Lipschitz continuous on $\Gamma$,  we get that $\lVert f(\y) - f(\yp)\lVert_{C(0,T;L^2(D))} = \lVert f - g\lVert_{C(0,T;L^2(D))}\leq L_f\lVert \y -\yp\lVert$, where $L_f$ is the Lipschitz constant. Also, note that we are slightly misusing the symbol $f$ to denote both the function $f\in C(\Gamma;C(0,T;L^2(D)))$ employed in equation (\ref{eqn:weak3}) and the function $f = f(\yp) \in C(0,T;L^2(D))$ used in equation (\ref{eqn:DeterministicHeat}).
\end{rema}
Let us also consider the following problem: find $u_d^m\in X^d \subset H_h(D)$ such that $u_d^0(\mathbf{x})=0$ and for $m=1,\ldots,N$,
\begin{eqnarray}
\label{eqn:DeterministicHeatPODPurturbed}
(u_d^m,v_d)+k(\nabla u_d^m,\nabla v_d) = k(f^m,v_d)+(u_d^{m-1},v_d),~~~\forall v_d\in X^d \subset H_h(D), ~~~m=1,\ldots,N.
\end{eqnarray}
Note that equations (\ref{eqn:DeterministicHeatPODPurturbed}) and (\ref{part1}) are identical, using the fact that we are using $f = f(\y)$. Our aim is to find an estimate for $\lVert u^m - u_d^m\Vert_0$. First we need to prove two lemmas.
\begin{lemm}
Let $u$ be the solution of problem (\ref{eqn:DeterministicHeatPurturbed}) and let $w$ be the solution of problem (\ref{eqn:DeterministicHeat}), then we have:
\begin{eqnarray}
\label{perturbationError}
\lVert u^m - w^m\lVert_0 \leq C\lVert f - g\lVert_{C(0,T;L^2(D))} .
\end{eqnarray}
\end{lemm}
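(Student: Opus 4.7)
The plan is to treat estimate (\ref{perturbationError}) as the standard continuous-dependence result for the heat equation on its forcing term. First I introduce the error $e := u - w$; by linearity, subtracting the weak formulations (\ref{eqn:DeterministicHeat}) and (\ref{eqn:DeterministicHeatPurturbed}) and using the shared zero initial data, $e$ solves
\[
(e_t, v) + (\nabla e, \nabla v) = (f - g, v), \quad \forall v \in H_0^1(D), \ t \in (0, T], \qquad e(\cdot, 0) = 0.
\]

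Next I perform the classical energy estimate by choosing the admissible test function $v = e$, which yields
\[
\tfrac{1}{2}\tfrac{d}{dt}\lVert e\rVert_0^2 + \lVert \nabla e\rVert_0^2 = (f - g, e).
\]
The diffusion term is nonnegative and is discarded. Applying Cauchy--Schwarz on the right and setting $M := \lVert f - g\rVert_{C(0,T;L^2(D))}$ gives $\tfrac{1}{2}\tfrac{d}{dt}\lVert e\rVert_0^2 \leq M \lVert e\rVert_0$. Rewriting the left side as $\lVert e\rVert_0 \tfrac{d}{dt}\lVert e\rVert_0$ at points where $\lVert e\rVert_0 > 0$ and cancelling yields $\tfrac{d}{dt}\lVert e\rVert_0 \leq M$. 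Integrating from $0$ to $t_m$ and using $e(\cdot, 0) = 0$ produces $\lVert e^m\rVert_0 \leq M t_m \leq T M$, which is exactly the claim with $C = T$.

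The only mild obstacle, and the step I would write most carefully, is the nondifferentiability of $\lVert e\rVert_0$ at times where it vanishes, so the cancellation in the step above is not a priori justified. I would handle this in the standard way: either replace $\lVert e\rVert_0^2$ by $\lVert e\rVert_0^2 + \varepsilon$, run the argument, and pass $\varepsilon \to 0$, or bypass the issue by applying Young's inequality $2 M \lVert e\rVert_0 \leq M^2 + \lVert e\rVert_0^2$ directly in the energy identity and invoking Gronwall's lemma to obtain $\lVert e^m\rVert_0^2 \leq T e^T M^2$. Either route yields (\ref{perturbationError}) with a constant depending only on $T$; no structural information about $f$, $g$, $u$, $w$ beyond the weak formulations and the vanishing initial datum is required, which is why the bound is purely linear in $\lVert f - g\rVert_{C(0,T;L^2(D))}$.
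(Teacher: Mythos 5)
Your proof is correct, and it closes the energy estimate by a genuinely different mechanism than the paper. Both arguments start identically: set the error $z=u-w$, subtract the two weak formulations, and test with the error itself. The paper, however, never passes to a pointwise differential inequality; it integrates the identity twice. A first integration up to $t_m$ (dropping the nonnegative gradient term and applying Young's inequality) gives $\lVert z^m\rVert_0^2 \le T M^2 + \int_0^T\lVert z\rVert_0^2\,dt$ with $M=\lVert f-g\rVert_{C(0,T;L^2(D))}$, and the remaining term $\int_0^T\lVert z\rVert_0^2\,dt$ is then controlled by a second integration up to $T$ in which the diffusion term is \emph{retained}, converted into an $L^2$ term via the Poincar\'e inequality, and absorbed with a Young parameter $\delta$; this yields the constant $C=\sqrt{T\bigl(1+\tfrac{C_p^2}{2\delta-C_p^2\delta^2}\bigr)}$, which depends on the domain through $C_p$ but avoids both Gronwall and any cancellation of norms. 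You instead discard the diffusion term immediately and close either via the inequality $\tfrac{d}{dt}\lVert e\rVert_0\le M$ (with the $\sqrt{\lVert e\rVert_0^2+\varepsilon}$ regularization you correctly flag as the delicate point) or via Young plus Gronwall. Your route is shorter and gives the cleaner, domain-independent constant $C=T$ (or $\sqrt{Te^T}$ by the Gronwall variant), so nothing is lost; what the paper's Poincar\'e-based device buys is mainly stylistic consistency, since the same coercivity-plus-absorption pattern is reused verbatim in the subsequent discrete lemma for $\lVert u_d^m-w_d^m\rVert_0$, where a pointwise-in-time argument of your type is not available. Both proofs rely on the same tacit regularity, namely that $t\mapsto\lVert e(t)\rVert_0^2$ is absolutely continuous with derivative $2(e_t,e)$, so you are on equal footing there.
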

\begin{proof}
let $z = u - w$ and subtract equations (\ref{eqn:DeterministicHeat}) and (\ref{eqn:DeterministicHeatPurturbed}) to get:
\begin{eqnarray}
\label{eqn:errorHeat}
(z_t,v) + (\nabla z, \nabla v) = (f-g,v),~~~~\forall v \in H_0^1(D),
\end{eqnarray}
with $z(\x,0)=0$, for all $\x\in D$. Letting $v = z$ and integrating equation (\ref{eqn:errorHeat}) from 0 to $t_m$, we get:
\[\frac{1}{2}\int_0^{t_m}\frac{d}{dt}\lVert z\lVert_0^2dt + \int_0^{t_m}(\nabla z,\nabla z)dt = \int_0^{t_m}(f-g,z)dt.\]
This results in
\[\frac{1}{2}\lVert z^m\lVert_0^2\leq \int_0^{t_m}\lVert f-g\lVert_0 \lVert z\lVert_0dt \leq \frac{1}{2}\int_0^T\lVert f-g\lVert_0^2dt +  \frac{1}{2}\int_0^T\lVert z\lVert_0^2dt.\]
Therefore,
\begin{eqnarray}
\label{eqn:intermediate}
\lVert z^m\lVert^2_0\leq T\lVert f -g \lVert_{C(0,T;L^2(D))}^2 + \int_0^T\lVert z\lVert_0^2dt.
\end{eqnarray}
Now we need to bound $ \int_0^T\lVert z\lVert_0^2dt$. For this, we integrate (\ref{eqn:errorHeat}) once again but this time upto $T$, and use the Poincar\'{e} inequality $\lVert v\lVert_0 \leq C_p\lVert\nabla v\lVert_0$, for each $v \in H_0^1(D)$, to get:
\[\frac{1}{2}\lVert z(T)\lVert_0^2 + \frac{1}{C_p^2}\int_0^T\lVert z\lVert_0^2dt \leq \int_0^T\lVert f-g\lVert_0 \lVert z\lVert_0dt.\]
Therefore,
\[\int_0^T\lVert z\lVert_0^2dt\leq C_p^2\left(\frac{1}{2\delta}\int_0^T\lVert f-g\lVert_0^2dt + \frac{\delta}{2}\int_0^T\lVert z\lVert_0^2dt\right).\]
Thus,
\[(1-\frac{C_p^2}{2}\delta)\int_0^T\lVert z\lVert_0^2dt\leq \frac{C_p^2}{2\delta}T\lVert f-g\lVert_{C(0,T;L^2(D))}^2 .\]
Choose $\delta > 0$ such that $1-\frac{C_p^2}{2}\delta>0$, and let
\[C = \sqrt{T\left(1+\frac{C_p^2}{2\delta-C_p^2\delta^2}\right)}.\]
Now, equation (\ref{eqn:intermediate}) implies (\ref{perturbationError}).
\end{proof}
\begin{lemm}
\label{lemma:perturbationErrorPOD}
Let $u_d^m$ be the solution of problem (\ref{eqn:DeterministicHeatPODPurturbed}) and $w_d^m$ be the solution of problem (\ref{eqn:DeterministicHeatPOD}), then we have:
\begin{eqnarray}
\label{perturbationErrorPOD}
\lVert u_d^m - w_d^m \lVert_0 \leq C\lVert f - g\lVert_{C(0,T;L^2(D))}
\end{eqnarray}
\end{lemm}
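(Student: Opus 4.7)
The plan is to mirror the structure of the previous lemma but in the fully discrete, POD-projected setting; since both $u_d^m$ and $w_d^m$ live in the same POD subspace $X^d$, I can directly subtract their defining equations without worrying about commuting the projection.

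First I would set $z_d^m := u_d^m - w_d^m$, note that $z_d^0 = 0$, and subtract \eqref{eqn:DeterministicHeatPODPurturbed} from \eqref{eqn:DeterministicHeatPOD} (or rather the other way around) to obtain, for all $v_d \in X^d$,
\begin{equation*}
(z_d^m, v_d) + k(\nabla z_d^m, \nabla v_d) = k(f^m - g^m, v_d) + (z_d^{m-1}, v_d).
\end{equation*}
The natural choice is $v_d = z_d^m$, which is admissible since $z_d^m \in X^d$. After discarding the nonnegative stiffness term $k\lVert \nabla z_d^m\rVert_0^2$ and applying Cauchy--Schwarz on the right-hand side, I get
\begin{equation*}
\lVert z_d^m\rVert_0^2 \leq \lVert z_d^m\rVert_0 \bigl( k\lVert f^m - g^m\rVert_0 + \lVert z_d^{m-1}\rVert_0 \bigr),
\end{equation*}
which simplifies to the one-step recurrence $\lVert z_d^m\rVert_0 \leq \lVert z_d^{m-1}\rVert_0 + k\lVert f^m - g^m\rVert_0$.

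Iterating this recurrence from $z_d^0 = 0$ gives $\lVert z_d^m\rVert_0 \leq \sum_{j=1}^m k \lVert f^j - g^j\rVert_0$, and bounding each $\lVert f^j - g^j\rVert_0$ by $\lVert f - g\rVert_{C(0,T;L^2(D))}$ and using $mk \leq T$ yields \eqref{perturbationErrorPOD} with $C = T$.

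I do not expect any real obstacle here: because the discrete backward Euler scheme is unconditionally stable in the $L^2$ norm and the POD space $X^d$ is common to both problems, the argument is strictly simpler than the continuous version in the previous lemma (no Poincaré inequality or Grönwall iteration in $\delta$ is required). The only point worth checking carefully is that no constant in the bound depends on $d$ or on the POD eigenvalues $\lambda_j$ -- and indeed the estimate above involves only $T$ and the $L^2(D)$ norms, so the constant is uniform in the POD discretization.
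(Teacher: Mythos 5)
Your proof is correct and follows essentially the same route as the paper's: subtract the two fully discrete schemes to get the error equation for $z_d^m = u_d^m - w_d^m$, test with $v_d = z_d^m$, derive a one-step recurrence, and sum from $z_d^0 = 0$. The only real difference is the treatment of the stiffness term: the paper keeps $k\lVert \nabla z_d^m\rVert_0^2$ and converts it via the Poincar\'e inequality into a damping factor $\left(1+k/C_p^2\right)^{-1}$, so that summing the resulting geometric series yields $C = C_p^2\left(1-e^{-km/C_p^2}\right) \leq C_p^2$, a constant independent of the final time; you simply discard the nonnegative stiffness term and obtain $C = t_m \leq T$. Both constants are independent of $h$, $k$, $d$ and the POD eigenvalues, so either version serves the lemma and the subsequent theorems equally well --- the paper's bound is merely sharper (uniform in $T$), while yours is the more elementary unconditional-stability argument, exactly as you observed.
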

\begin{proof}
let $z_d^m = u_d^m - w_d^m$ and subtract equations (\ref{eqn:DeterministicHeatPOD}) and (\ref{eqn:DeterministicHeatPODPurturbed}) to get:
\begin{eqnarray}
\label{eqn:errorHeatPOD}
(z_d^m,v_d)+k(\nabla z_d^m,\nabla v_d) = k(f^m-g^m,v_d)+(z_d^{m-1},v_d),~~~\forall v_d\in X^d \subset H_h(D), ~~~m=1,\ldots,N,
\end{eqnarray}
with  $z_d^0(\mathbf{x})=0$. Let $v_d = z_d^m$ in equation (\ref{eqn:errorHeatPOD}) and use Poincar\'{e} inequality $\lVert v\lVert_0 \leq C_p\lVert\nabla v\lVert_0$, for each $v \in H_0^1(D)$, to achieve:
\[\lVert z_d^m\lVert_0^2 + k\frac{1}{C_p^2}\lVert z_d^m\lVert_0^2\leq k\lVert f^m-g^m\lVert_0\lVert z_d^m\lVert_0 + \lVert z_d^{m-1}\lVert_0\lVert z_d^m\lVert_0.\]
Therefore,
\[\left(1+ k\frac{1}{C_p^2}\right)\lVert z_d^m\lVert_0\leq k\lVert f^m-g^m\lVert_0 + \lVert z_d^{m-1}\lVert_0,\]
which upon summation yields,
\[\lVert z_d^m\lVert_0\leq k \lVert f-g\lVert_{C(0,T;L^2(D))}\sum_{j=1}^m\left(\frac{1}{1+\frac{k}{C_p^2}}\right)^j.\]
Let $\gamma = \frac{1}{C_p^2}$ and note that $(1 + \gamma k)^m\leq e^{\gamma k m}$. Moreover, setting $\zeta = 1/(1+\gamma k)$ we find:
\[k\sum_{j=1}^m\left(\frac{1}{1+\frac{k}{C_p^2}}\right)^j = k\frac{1-\zeta^m}{\zeta^{-1}-1} = \frac{1-\zeta^m}{\gamma}\leq \frac{1-e^{-\gamma k m} }{\gamma}.\]
Letting $C = (1-e^{-\gamma k m})/\gamma$, we get (\ref{perturbationErrorPOD}).
\end{proof}
Now using estimates (\ref{mainPODerrorEstimate}), (\ref{perturbationError}) and (\ref{perturbationErrorPOD}) and remark \ref{LipschitzAssumtion}, we get the following error estimate.
\begin{theo}
Let $u$ be the solution of problem (\ref{eqn:DeterministicHeatPurturbed}), and $u_d^m$ be the solution of problem (\ref{eqn:DeterministicHeatPODPurturbed}), for $m = 1,\ldots,N$, we have
\begin{eqnarray}
\label{ess:DeterministicL2Norm}
\lVert u^m - u_d^m\lVert_0\leq C \eta + Ch^{s+1}+Ck+C\left(k^{1/2}\sum_{j=d+1}^l\lambda_j\right)^{1/2},
\end{eqnarray}
where the eigenvalues $\lambda_j$ depend on $\yp\in B_\eta(\y)\subset \Gamma$, and the constants C depend on $\y$ and $\yp$, but are independent of $h$, $k$ and $\eta$.
\end{theo}
\section{Error analysis}
In this section, we carry out an error analysis for the multi-fidelity collocation method introduced in section \ref{sec:MFCollocationMethod} for problem (\ref{eqn:weak3}). In \cite{babuvska2007stochastic}, the authors showed that if the solution of (\ref{eqn:weak3}) is analytic with respect to the random parameters, then the collocation scheme (\ref{CollocationScheme}) attains an exponential error decay for $u_d^m - u_{d,\mathbf{p}}^m$ with respect to each $p_n$. The convergence proof in \cite{babuvska2007stochastic} applies directly to our case. Therefore, our main task is to prove the analyticity property of the POD solution $u_d^m$ eith respect to each random variable $y_n$. We will then only state the corresponding convergence result. In the following we impose similar restrictions on $f$ as in \cite{babuvska2007stochastic,zhang2012error}, \ie $f$ is continuous with respect to each element $\y \in \Gamma$ and that it has at most exponential growth at infinity, whenever the domain $\Gamma$ is unbounded. Moreover, we assume that joint density function $\rho$ behaves like a Gaussian kernel at infinity. In order to make it precise, we introduce the weight function $\bm{\sigma(\y)}=  \prod_{n=1}^r\sigma_n(y_n) \leq 1$, where
\[\sigma_n(y_n) = \left\{\begin{array}{ll}
1 & \text{if } \Gamma_n \text{ is bounded,} \\ 
e^{-\alpha_n\lvert y_n \lvert}\text{ for some $\alpha_n>0$} & \text{if } \Gamma_n \text{ is unbounded,}
\end{array}  \right.\]
and the space
\[C_{\bm{\sigma}}^0(\Gamma;V)=\{v:\Gamma \rightarrow V: v \text{ is continuous in $\y$ and $\text{max}_{\y\in\Gamma}\{\bm{\sigma}(\y)\lVert v(\y)\lVert_V\}<+\infty$}\},\]
where $V$ is a Banach space. In what follows, we assume that $f\in C_{\bm{\sigma}}^0(\Gamma;C([0,T];L^2(D)))$ and the joint probability density $\rho$ satisfies
\begin{eqnarray}
\label{ass:onRho}
\rho(\y)\leq C_Me^{-\sum_{n=1}^r(\delta_n y_n)^2}, ~~~\forall \y \in \Gamma,
\end{eqnarray}
for some constant $C_M>0$, with $\delta_n$ being strictly positive if $\Gamma_n$ is unbounded and zero otherwise. Under these assumptions, the following proposition is immediate; see \cite{babuvska2007stochastic}.
\begin{prop}
The solution of problem (\ref{eqn:weak3}) satisfies $u\in C_{\bm{\sigma}}^0(\Gamma;C(0,T;H_0^1(D)))$ and correspondingly, the approximate solution $u_d^m$ resulted from (\ref{eqn:DeterministicHeatPODPurturbed}) or equivalently (\ref{part1}), satisfies $u_d^m\in C_{\bm{\sigma}}^0(\Gamma;H_h(D))$, for $m=1,\ldots,N$.
\end{prop}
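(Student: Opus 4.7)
For the statement on $u$ I would follow the argument in \cite{babuvska2007stochastic}. For each fixed $\y\in\Gamma$, the function $u(\y)$ solves the deterministic parabolic problem (\ref{eqn:DeterministicHeatPurturbed}) with forcing $f(\y)$, and the standard energy estimate (test with $v=u$, apply Poincar\'e, and use Gronwall) gives $\lVert u(\y)\rVert_{C(0,T;H_0^1(D))} \leq C\lVert f(\y)\rVert_{C(0,T;L^2(D))}$. Applying the same estimate to $u(\y)-u(\y^\ast)$, whose forcing is $f(\y)-f(\y^\ast)$, turns continuity of $f\in C_{\bm\sigma}^0(\Gamma;C([0,T];L^2(D)))$ in $\y$ into continuity of $u(\y)$ in $\y$. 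Multiplying the a priori estimate through by $\bm\sigma(\y)$ and invoking $\max_\y\bm\sigma(\y)\lVert f(\y)\rVert_{C(0,T;L^2(D))}<+\infty$ produces the weighted boundedness, and hence $u\in C_{\bm\sigma}^0(\Gamma;C(0,T;H_0^1(D)))$.

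\textbf{Part 2 (POD-Galerkin solution $u_d^m$).} The key observation is that in equation (\ref{part1}) the POD basis $\{\psi_j(\yp)\}_{j=1}^d$ is determined by the pivot $\yp$, so within any region of $\Gamma$ on which $\yp$ is held fixed the mass and stiffness matrices assembled in this basis are $\y$-independent; only the right-hand side depends on $\y$, through $f^m(\y)$, and this dependence is continuous by hypothesis. Writing $u_d^m(\y)=\sum_j c_j^m(\y)\psi_j(\yp)$ and expressing (\ref{part1}) in matrix form, the coefficient vector $\mathbf{c}^m(\y)$ satisfies an invertible linear system with a continuous right-hand side, so $\mathbf{c}^m(\y)$, and therefore $u_d^m(\y)$, is continuous in $\y$ into $H_h(D)$; an induction on $m$ starting from $u_d^0=0$ closes this step. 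For the weighted bound I would derive a stability estimate analogous to Lemma \ref{lemma:perturbationErrorPOD}: testing (\ref{part1}) with $v_d=u_d^m$, using Poincar\'e on the left, Cauchy--Schwarz on the right, and telescoping in $m$, one obtains $\lVert u_d^m(\y)\rVert_0 \leq C_T\lVert f(\y)\rVert_{C(0,T;L^2(D))}$. Multiplying by $\bm\sigma(\y)$ and using $f\in C_{\bm\sigma}^0$ supplies the desired finite weighted supremum.

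\textbf{Main obstacle.} The non-trivial point is the $\y$-dependence of the pivot $\yp$ in the overall algorithm: if $\yp$ is allowed to change abruptly as $\y$ moves through $\Gamma$, global continuity of $u_d^m$ on all of $\Gamma$ is not automatic. The cleanest reading of the proposition, and the one that actually feeds the upcoming collocation analysis, is to regard $\y\mapsto u_d^m(\y)$ as the function defined by (\ref{part1}) for a \emph{single} fixed pivot $\yp$ — this is precisely the quantity being interpolated by (\ref{CollocationScheme}) inside each $\eta$-neighborhood — and with that reading the steps above give the claim. If one instead wants continuity of the globally assembled $u_d^m$, one would patch neighborhoods using the error bound (\ref{ess:DeterministicL2Norm}), but this is not needed for the convergence proof that follows.
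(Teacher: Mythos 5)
Your proposal is correct in substance, but it is worth knowing that the paper itself offers no argument at all: the proposition is declared ``immediate'' with a pointer to \cite{babuvska2007stochastic}, so your self-contained sketch is genuinely more than what appears in the text. Your Part 2 is essentially the argument the paper implicitly relies on: for a fixed POD space the scheme (\ref{part1}) is a finite-dimensional linear system whose matrices are $\y$-independent and whose right-hand side inherits continuity from $f$, and the weighted bound is exactly the discrete energy estimate that the paper states as the lemma immediately following the proposition ($\lVert u_d^m\rVert \leq C_p^2(1-e^{-km/C_p^2})\lVert f\rVert$), proved there ``as in Lemma \ref{lemma:perturbationErrorPOD}''. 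Two remarks. First, a small technical slip in Part 1: testing with $v=u$ and using Gronwall only controls $\sup_t\lVert u\rVert_{L^2(D)}$ and $\int_0^T\lVert\nabla u\rVert_0^2\,dt$; to bound the $C(0,T;H_0^1(D))$ norm claimed in the proposition you should test with $v=u_t$ (giving $\sup_t\lVert\nabla u\rVert_0^2\leq\int_0^T\lVert f\rVert_0^2\,dt$), after which the linearity/weighting argument you describe goes through unchanged. Second, your ``main obstacle'' observation is a real issue that the paper silently glosses over: the globally assembled $u_d^m$ can jump where the pivot $\yp(\y)$ switches, and the paper's later analysis is only consistent with your fixed-pivot (or piecewise, over the covering $\{{}^i\Gamma\}$) reading --- the analyticity lemma differentiates (\ref{part1}) in $y_n$ with the POD space held fixed, and the deterministic error bound (\ref{ess:DeterministicPart}) integrates piecewise over the sets ${}^i\Gamma$ with one pivot per piece. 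So your reading is the right one, and flagging it explicitly is a genuine improvement over the paper's presentation.
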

Furthermore, we have the following regularity result.
\begin{lemm}
The following energy estimate holds:
\[\lVert u_{d}^m\lVert_{L^2(D)\otimes L_\rho^2(\Gamma)}\leq C_p^2(1-e^{-\frac{km}{C_p^2}})\lVert f\lVert_{C(0,T;L^2(D))\otimes L_\rho^2(\Gamma)},\]
where $C_p$ is the Poincar\'{e} canstant.
\end{lemm}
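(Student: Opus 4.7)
The plan is to mimic the energy argument already used in Lemma~\ref{lemma:perturbationErrorPOD}, but applied directly to $u_d^m$ and with the $L_\rho^2(\Gamma)$-norm taken after the deterministic (pointwise in $\y$) bound is in hand. Since the equation is linear and the POD space is a closed subspace of $H_h(D)\subset H_0^1(D)$, the Poincar\'e inequality $\lVert v\rVert_0\le C_p\lVert\nabla v\rVert_0$ still applies to any test $v_d\in X^d$, which is what drives the contraction.

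The first step is to test equation~(\ref{eqn:DeterministicHeatPODPurturbed}) with $v_d = u_d^m$, giving
\[
\lVert u_d^m\rVert_0^2 + k\lVert\nabla u_d^m\rVert_0^2 = k(f^m,u_d^m) + (u_d^{m-1},u_d^m).
\]
Using the Poincar\'e inequality on the stiffness term and Cauchy--Schwarz on the right, divide through by $\lVert u_d^m\rVert_0$ to arrive at the one-step recursion
\[
\Bigl(1+\tfrac{k}{C_p^2}\Bigr)\lVert u_d^m\rVert_0 \;\le\; k\lVert f^m\rVert_0 + \lVert u_d^{m-1}\rVert_0,
\]
exactly as in the proof of Lemma~\ref{lemma:perturbationErrorPOD} but with the forcing $f^m$ instead of $f^m-g^m$.

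The second step is to iterate this recursion from the initial condition $u_d^0 = 0$. Setting $\zeta = 1/(1+k/C_p^2)$ and bounding $\lVert f^j\rVert_0$ by $\lVert f\rVert_{C(0,T;L^2(D))}$ uniformly in $j$, one obtains
\[
\lVert u_d^m\rVert_0 \;\le\; k\lVert f\rVert_{C(0,T;L^2(D))}\sum_{j=1}^m \zeta^j.
\]
The geometric sum telescopes cleanly: since $1-\zeta = k/(C_p^2+k)$, we have $k\zeta/(1-\zeta) = C_p^2$, so $k\sum_{j=1}^m\zeta^j = C_p^2(1-\zeta^m)$. Combining this with the elementary bound $(1+k/C_p^2)^m \le e^{km/C_p^2}$ (equivalently $\zeta^m \ge e^{-km/C_p^2}$) yields the pointwise-in-$\y$ estimate
\[
\lVert u_d^m(\cdot,\y)\rVert_0 \;\le\; C_p^2\bigl(1-e^{-km/C_p^2}\bigr)\lVert f(\cdot,\cdot,\y)\rVert_{C(0,T;L^2(D))}.
\]

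The final step is to pass from the pointwise bound to the tensor-product norm. Square both sides of the above inequality, multiply by $\rho(\y)$, and integrate over $\Gamma$; since the constant $C_p^2(1-e^{-km/C_p^2})$ is independent of $\y$, it pulls out of the integral, and taking square roots delivers the stated bound. The main thing to watch, rather than an obstacle, is the telescoping identity $k\zeta/(1-\zeta) = C_p^2$: this is the precise place where the Poincar\'e constant enters the final estimate and where the apparent $O(k m)$ prefactor is replaced by the bounded quantity $C_p^2(1-e^{-km/C_p^2})$. Everything else is just linearity, Cauchy--Schwarz, and Fubini, and no property of the POD subspace $X^d$ is used beyond its being a subspace of $H_0^1(D)$.
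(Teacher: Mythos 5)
Your proof is correct and is exactly the argument the paper intends: its proof of this lemma simply says ``similar to the proof of Lemma~\ref{lemma:perturbationErrorPOD}'', and you carry out precisely that energy/recursion argument (test with $u_d^m$, Poincar\'e, geometric sum giving $C_p^2(1-\zeta^m)\le C_p^2(1-e^{-km/C_p^2})$), followed by the routine integration in $\y$ against $\rho$.
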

\begin{proof}
Similar to the proof of lemma \ref{lemma:perturbationErrorPOD}.
\end{proof}
\subsection{Analyticity with respect to random parameters}
We prove that the solution $u_d^m$ of equation (\ref{eqn:DeterministicHeatPODPurturbed}) is analytic with respect to each random parameter $y_n\in\Gamma$, whenever $f(\y)$ is infinitely differentiable with respect to each component of $\y$. To do this, we introduce the following notations as in \cite{babuvska2007stochastic,zhang2012error}:
\[\y_n^* \in \Gamma_n^* =\prod_{j=1,j\neq n}^r\Gamma_j ~~~~~\text{and}~~~~~~\bm{\sigma}_n^*=\prod_{j=1,j\neq n}^r \sigma_j.\]
We first make the additional assumption that for every $\y = (y_n,\y_n^*)\in \Gamma$, there exists $\gamma_n < +\infty$ such that
\begin{eqnarray}
\label{ass:OnF}
\frac{\lVert \partial_{y_n}^jf(\y) \lVert_{C(0,T;L^2(D))}}{1 + \lVert f(\y) \lVert_{C(0,T;L^2(D))}}\leq \gamma_n^j j! .
\end{eqnarray}
\begin{rema}
Under the finite dimensional noise assumption (\ref{ass:FiniteNoise}), $f(t,\x,\omega)$ is represented by a truncated linear or nonlinear expansion so that assumption (\ref{ass:OnF}) holds. For example, consider a truncated KL expansion for random forcing term $f(t,\x,\omega)$ given by
\begin{equation}
\label{fKLExpansion}
f(t,\x,\omega) = f(t,\x,\y(\omega)) = \mathbb{E}[f](t,\x) + \sum_{n=1}^r\sqrt{\mu_n}c_n(t,\x)y_n(\omega).
\end{equation}
We have
\[\frac{\lVert \partial_{y_n}^jf(\y) \lVert_{C(0,T;L^2(D))}}{1 + \lVert f(\y) \lVert_{C(0,T;L^2(D))}}\leq \left\{\begin{array}{ll}
\sqrt{\mu_n}\lVert c_n \lVert_{C(0,T;L^2(D))},& j=1, \\ 
0, & j>1.
\end{array}  \right.\]
Therefore, we can set $\gamma_n = \sqrt{\mu_n}\lVert c_n \lVert_{C(0,T;L^2(D))}$, and observe that definition (\ref{fKLExpansion}) satisfies assumption (\ref{ass:OnF}). Moreover, the random forcing $f(t,\x,\y)$ defined in (\ref{fKLExpansion}), satisfies the Lipschitz continuity assumption of remark \ref{LipschitzAssumtion}.
\end{rema}
\begin{lemm}
Under assumption (\ref{ass:OnF}), if the solution $u_d^m(\x,y_n,\y_n^*)$ is considered as a function of $y_n$, \ie $u_d^m:\Gamma_n \rightarrow C_{\bm{\sigma}_n^*}^0(\Gamma_n^*;L^2(D))$, then the $j$-th derivative of $u_d^m(\x,\y)$ with respect to $y_n$ satisfies
\begin{eqnarray}
\label{ess:Analiticity}
\lVert \partial_{y_n}^ju_d^m(\y)\lVert_{L^2(D)}\leq Cj!\gamma_n^j, ~~~m=1,\ldots,N,
\end{eqnarray}
where $C$ depends on $\lVert f(\y)\lVert_{C(0,T;L^2(D))}$, and the Poincar\'{e} constant $C_p$.
\end{lemm}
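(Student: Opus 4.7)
The plan is to mimic the energy argument used in Lemma \ref{lemma:perturbationErrorPOD}, but applied to the derivative $\partial_{y_n}^j u_d^m$ rather than to a difference of two POD solutions. The key observation that makes differentiation harmless is that the POD subspace $X^d$ is constructed from snapshots at the fixed nearby point $\yp \in B_\eta(\y)$, so $X^d$ does not depend on $y_n$. Consequently, differentiating equation (\ref{eqn:DeterministicHeatPODPurturbed}) formally $j$ times with respect to $y_n$ yields, for each test function $v_d \in X^d$,
\begin{eqnarray*}
(\partial_{y_n}^j u_d^m,v_d) + k(\nabla \partial_{y_n}^j u_d^m, \nabla v_d) = k(\partial_{y_n}^j f^m(\y),v_d) + (\partial_{y_n}^j u_d^{m-1},v_d),
\end{eqnarray*}
with $\partial_{y_n}^j u_d^0 = 0$ from the vanishing initial condition. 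Since $\partial_{y_n}^j u_d^m \in X^d$, it is a legitimate test function.

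Next I would choose $v_d = \partial_{y_n}^j u_d^m$, apply Cauchy--Schwarz on the right-hand side, and use the Poincar\'e inequality $\vn{v}_0 \le C_p\vn{\nabla v}_0$ on the gradient term exactly as in the proof of Lemma \ref{lemma:perturbationErrorPOD}. After dividing through by $\vn{\partial_{y_n}^j u_d^m}_0$, this gives the one-step recursion
\begin{eqnarray*}
\Bigl(1+\tfrac{k}{C_p^2}\Bigr)\vn{\partial_{y_n}^j u_d^m}_0 \le k\vn{\partial_{y_n}^j f^m(\y)}_0 + \vn{\partial_{y_n}^j u_d^{m-1}}_0.
\end{eqnarray*}
Setting $\zeta = 1/(1+k/C_p^2)$ and iterating down to $m=0$ produces
\begin{eqnarray*}
\vn{\partial_{y_n}^j u_d^m}_0 \le k\sum_{i=1}^m \zeta^{m-i+1}\vn{\partial_{y_n}^j f^i(\y)}_0.
\end{eqnarray*}

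Now I would invoke assumption (\ref{ass:OnF}), which gives the uniform-in-time bound $\vn{\partial_{y_n}^j f^i(\y)}_0 \le \gamma_n^j\,j!\,(1+\vn{f(\y)}_{C(0,T;L^2(D))})$. Pulling this factor out, the remaining geometric sum evaluates as in Lemma \ref{lemma:perturbationErrorPOD} to
\begin{eqnarray*}
k\sum_{i=1}^m \zeta^{m-i+1} = C_p^2(1-\zeta^m) \le C_p^2\bigl(1-e^{-km/C_p^2}\bigr)\le C_p^2,
\end{eqnarray*}
so that $\vn{\partial_{y_n}^j u_d^m}_0 \le C\,j!\,\gamma_n^j$ with $C = C_p^2(1+\vn{f(\y)}_{C(0,T;L^2(D))})$, which is the required estimate.

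The only real subtlety is justifying that $\partial_{y_n}^j u_d^m$ exists in $X^d$ and solves the differentiated equation. This follows by induction on $j$: for $j=0$ the energy estimate provides boundedness; assuming the induction hypothesis for $j-1$ one sets up the finite-dimensional linear system for $u_d^m$ in the fixed basis $\{\psi_i(\yp)\}$, observes that its coefficient matrix is independent of $\y$ while the right-hand side depends smoothly on $\y$ through $f(\y)$, and concludes that the coefficients of $u_d^m$ inherit the regularity of $f$. No other step is expected to be difficult; the argument is essentially a routine energy estimate, and the $j!\gamma_n^j$ growth on the right comes entirely from the corresponding growth imposed on $f$ by (\ref{ass:OnF}).
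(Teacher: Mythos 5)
Your proposal is correct and follows essentially the same route as the paper: differentiate the POD scheme (\ref{eqn:DeterministicHeatPODPurturbed}) $j$ times in $y_n$, test with $v_d=\partial_{y_n}^j u_d^m$, use Poincar\'e to get the one-step recursion, sum the geometric series as in Lemma \ref{lemma:perturbationErrorPOD}, and invoke assumption (\ref{ass:OnF}), yielding the same constant up to bounding $1-e^{-km/C_p^2}$ by $1$. Your additional justification that $X^d$ is $y_n$-independent and that $\partial_{y_n}^j u_d^m\in X^d$ is a legitimate test function is a point the paper leaves implicit, but it is not a different argument.
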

\begin{proof}
Take the $j$-th derivative of formulation (\ref{eqn:DeterministicHeatPODPurturbed}) or equivalently (\ref{part1}) with respect to $y_n$, and let $v_d = \partial_{y_n}^ju_d^m(\y)$ to get
\[\lVert\partial_{y_n}^ju_d^m(\y)\lVert_0^2+k\lVert\partial_{y_n}^j\nabla u_d^m(\y)\lVert_0^2 = k(\partial_{y_n}^jf^m(\y),\partial_{y_n}^ju_d^m(\y))+(\partial_{y_n}^ju_d^{m-1}(\y),\partial_{y_n}^ju_d^m(\y)).\]
Therefore,
\[(1+\frac{k}{C_p^2})\lVert\partial_{y_n}^ju_d^m(\y)\lVert_0\leq k\lVert \partial_{y_n}^jf^m(\y)\lVert_0 + \lVert\partial_{y_n}^ju_d^{m-1}(\y)\lVert_0,\]
which upon summation yields
\[\lVert\partial_{y_n}^ju_d^m(\y)\lVert_0\leq k \lVert \partial_{y_n}^jf(\y)\lVert_{C(0,T;L^2(D))}\sum_{i=1}^m\left(\frac{1}{1+\frac{k}{C_p^2}}\right)^i.\]
Thus,
\[\lVert\partial_{y_n}^ju_d^m(\y)\lVert_0 \leq C_p^2(1-e^{-\frac{km}{C_p^2}})[1 + \lVert f(\y) \lVert_{C(0,T;L^2(D))}] \gamma_n^j j!.\]
Letting $C = C_p^2(1-e^{-\frac{km}{C_p^2}})[1 + \lVert f(\y) \lVert_{C(0,T;L^2(D))}]$ we get (\ref{ess:Analiticity}).
\end{proof}
We will immediately obtain the following theorem, whose proof closely follows the proof of theorem 4.4 in \cite{zhang2012error}.
\begin{theo}
\label{thm:AnalyticExtension}
Under assumption (\ref{ass:OnF}), the solution $u_d^m(\x,y_n,\y_n^*)$ considered as a function of $y_n$, admits an analytic extension $u_d^m(\x,z,\y_n^*)$, $z\in\mathbb{C}$, in the region of complex plane
\[\Sigma(\Gamma_n,\tau_n) := \{z\in\mathbb{C}:\text{dist}(z,\Gamma_n) \leq \tau_n\},\]
where $0<\tau_n<1/\gamma_n$.
\end{theo}
\begin{proof}
For each $y_n\in\Gamma_n$ we define the power series $u_d^m:\mathbb{C}\rightarrow C_{\bm{\sigma}_n^*}^0(\Gamma_n^*;L^2(D))$ as
\[u_d^m(\x,z,\y_n^*)=\sum_{j=0}^\infty\frac{(z-y_n)^j}{j!}\partial_{y_n}^ju_d^m(\x,y_n,\y_n^*)\]
Thus,
\begin{eqnarray}
\sigma_n(y_n)\lVert u_d^m(z)\lVert_{C_{\bm{\sigma}_n^*}^0(\Gamma_n^*;L^2(D))}&\leq &\sum_{j=0}^\infty\frac{\lvert z-y_n\lvert^j}{j!}\lVert \partial_{y_n}^ju_d^m(y_n)\lVert_{C_{\bm{\sigma}_n^*}^0(\Gamma_n^*;L^2(D))} \nonumber \\
&\leq &\sigma_n(y_n)C(y_n)\sum_{j=0}^\infty(\lvert z-y_n\lvert\gamma_n)^j\nonumber \leq\hat{C} \sum_{j=0}^\infty(\lvert z-y_n\lvert\gamma_n)^j,
\end{eqnarray}
where $C(y_n)$ is a function of $\lVert f(y_n)\lVert_{C_{\bm{\sigma}_n^*}^0(\Gamma_n^*;C(0,T;L^2(D)))}$, and the constant $\hat{C}$ is a function of $\lVert f \lVert_{C_{\bm{\sigma}}^0(\Gamma;C(0,T;L^2(D)))}$. The series is convergent for all $z\in\mathbb{C}$, provided that $\lvert z - y_n\lvert \leq \tau_n < 1/\gamma_n$. Therefore, the function $u_d^m$ admits an analytic extension in the region $\Sigma(\Gamma_n;\tau_n)$.
\end{proof}
\subsection{Convergence analysis}
Our goal is to provide an estimate for the total error $e^m = u^m - u_{d,\mathbf{p}}^m$ in the norm $L^2(D)\otimes L_\rho^2(\Gamma)$, for each $m=1,\ldots,N$. The error splits naturally into $e^m =(u^m - u_{d}^m) + (u_{d}^m -u_{d,\mathbf{p}}^m)$. Recall that $u_{d,\mathbf{p}}^m = \mathcal{I}_{\mathbf{p}}u_d^m$ and is given by (\ref{CollocationScheme}). We can estimate the interpolation error $(u_{d}^m -u_{d,\mathbf{p}}^m)$ by repeating the same procedure as in \cite{babuvska2007stochastic}, using the analyticity result of theorem \ref{thm:AnalyticExtension}. All details about the estimates of the interpolation error can be found in section 4 of  \cite{babuvska2007stochastic} and the references cited therein. Therefore we state the following theorem without proof.
\begin{theo}
\label{thm:StochasticPart}
Under assumption (\ref{ass:OnF}), there exist positive constants $b_n, n=1,\ldots,r,$ and $C$ that are independent of $h, d$, and $\mathbf{p}$ such that
\begin{eqnarray}
\label{ess:StochasticPart}
\vn{u_d^m - u_{d,\mathbf{p}}^m}_{L^2(D)\otimes L_\rho ^2(\Gamma)} \leq C\sum_{n=1}^r\beta_n(p_n)\exp({-b_np_n^{\theta_n}}),
\end{eqnarray}
where
\[\theta_n = \beta_n = 1 \text{~~~and~~~} b_n = \log\left[ \frac{2\tau_n}{\lvert \Gamma_n\lvert}\left(1 + \sqrt{1 + \frac{\lvert \Gamma_n\lvert^2}{4\tau_n^2}} \right)\right]~~~~~\text{if ~$\Gamma_n$ is bounded},\]
and
\[\theta_n = \frac{1}{2},~~~\beta_n = O(\sqrt{p_n}), \text{~~~and~~~ }b_n = \tau_n\delta_n~~~~~\text{if ~$\Gamma_n$ is unbounded},\]
where $\tau_n$ is the minimum distance between $\Gamma_n$ and the nearest singularity in the complex plane, as defined in theorem~\ref{thm:AnalyticExtension}, and $\delta_n$ is defined in assumption (\ref{ass:onRho}).
\end{theo}
\begin{rema}[Convergence with respect to the number of collocation points] For an isotropic full tensor-product approximation, \ie $p_1=p_2=\cdots=p_r=p$, the number of collocation points $\Theta$ is given by $\Theta=(1+p)^d$. Thus, one can easily obtain the following error bound with respect to $\Theta$; see \cite{zhang2012error}.
\begin{eqnarray}
\label{ess:StochasticPartNumberOfPoints}
\vn{u_d^m - u_{d,\mathbf{p}}^m}_{L^2(D)\otimes L_\rho ^2(\Gamma)} \leq \left\{\begin{array}{ll}
C\Theta^{-b_{\text{min}}/{r}}, & \text{if $\Gamma$ is bounded,} \\ 
C\Theta^{-b_{\text{min}}/{2r}}, & \text{if $\Gamma$ is unbounded,}
\end{array}  \right.
\end{eqnarray}
where $b_{\text{min}} = \text{min}\{b_1,b_2,\ldots,b_r\}$ as in theorem \ref{thm:StochasticPart}. The constant $C$ depends on $r$ and $b_\text{min}$.
\end{rema}
\begin{rema}[Extensions to sparse grid stochastic collocation methods]
\label{rem:SparseGrid}
Note that the convergence as shown in (\ref{ess:StochasticPartNumberOfPoints}) becomes slower as the dimension $r$ increases. This slow-down effect as a result of increase in dimension is called the \textit{curse of dimensionality}. For large values of $r$, sparse grid stochastic collocation methods \cite{nobile2009analysis,nobile2008sparse}, specially adaptive and anisotropic ones, \eg \cite{ma2009adaptive,nobile2008anisotropic} are more effective in dealing with this problem. Our analyticity result (theorem \ref{thm:AnalyticExtension}) combined with the analysis in \cite{nobile2009analysis,nobile2008sparse,ma2009adaptive,nobile2008anisotropic}, can easily lead to the derivation of error bounds for sparse grid approximations. For instance, for an isotropic Smolyak approximation \cite{nobile2009analysis,nobile2008sparse} with a total of $\Theta$ sparse grid points, the error can be bounded by \[C\Theta^{-b_\text{min}/(1+\log(2r))}.\] Here, we will give a short description of the isotropic Smolyak algorithm. More detailed information can be found in \cite{barthelmann2000high,nobile2008sparse}. Assume $p_1=p_2=\cdots=p_r=p$. For $r=1$, let $\{\mathcal{I}_{1,i}\}_{i=1,2,\ldots}$ be a sequence of interpolation operators given by equation (\ref{CollocationScheme}). Define $\Delta_0 = \mathcal{I}_{1,0} = 0$ and $\Delta_i = \mathcal{I}_{1,i} - \mathcal{I}_{1,i-1}$. Now for $r > 1$, let
\begin{eqnarray}
\mathcal{A}(q,r) = \sum_{0\leq i_1+i_2+\ldots+i_r \leq q}\Delta_{i_1}\otimes\cdots\otimes \Delta_{i_r}
\end{eqnarray}
where $q$ is a non-negative integer. $\mathcal{A}(q,r)$ is the Smolyak operator, and $q$ is known as the sparse grid \textit{level}.
\end{rema}
Now we need to find error bounds for the deterministic part of our algorithm in the $L^2(D)\otimes L_\rho^2(\Gamma)$ norm, \ie $u^m - u_d^m$. First, note that according to (\ref{ass:onRho}), the joint density function $\rho$ behaves like a Gaussian kernel at infinity. Therefore, in practice we are literally dealing with a compact random parameter set $\Gamma$, since we can approximate $\Gamma$ with a large enough compact set. So from now on we assume that $\Gamma$ is compact. We know that $\Gamma \subset \bigcup_{\yp\in\Gamma}B_\eta(\yp)$. Thus, using the compactness assumption on $\Gamma$, there exist $\Upsilon\in\mathbb{N}_+$ and $\{{^i}\yp\}_{i=1}^\Upsilon\subset\Gamma$ such that $\Gamma = \bigcup_{i=1}^\Upsilon B_\eta({^i}\yp)\cap\Gamma$. Letting ${^i}\Gamma = B_\eta({^i}\yp)\cap\Gamma$, we can write $\Gamma = \bigcup_{i=1}^\Upsilon {^i}\Gamma$.
\begin{theo}
Under the Lipschitz continuity (see remark \ref{LipschitzAssumtion}) assumption, the exist constants $C$ and $\Lambda$ such that
\begin{eqnarray}
\label{ess:DeterministicPart}
\vn{u^m - u_d^m}_{L^2(D)\otimes L_\rho ^2(\Gamma)}\leq C\eta + Ch^{s+1}+Ck+Ck^{1/4}\Lambda.
\end{eqnarray}
\end{theo}
\begin{proof}
Let us first integrate the the last term in estimate (\ref{ess:DeterministicL2Norm}). Thus, we have
\[\bigintss_\Gamma \left\{C(\y,\yp(\y))\left(k^{1/2}\sum_{j=d(\y,\yp(\y))+1}^{l(\y,\yp(\y))}\lambda_j(\yp(\y))\right)^{1/2}\right\}^2\rho(\y)d\y = \]
\[k^{1/2}\int_\Gamma C(\y,\yp(\y))^2\sum_{j=d(\y,\yp(\y))+1}^{l(\y,\yp(\y))}\lambda_j(\yp(\y))\rho(\y)d\y = \]
\[k^{1/2}\sum_{i=1}^\Upsilon\left(\sum_{j=d({^i}\yp)+1}^{l({^i}\yp)}\lambda_j({^i}\yp)\right)\int_{{^i}\Gamma}C(\y,{^i}\yp)^2\rho(\y)d\y\]
Now letting $\Lambda_i = \sum_{j=d({^i}\yp)+1}^{l({^i}\yp)}\lambda_j({^i}\yp)$, and assuming $\Lambda^2 = \max_{i=1,\ldots,\Upsilon}\{\Lambda_i\}$, we get the following upper bound for the above expression:
\[k^{1/2}\Lambda^2\sum_{i=1}^\Upsilon\int_{{^i}\Gamma}C(\y,{^i}\yp)^2\rho(\y)d\y = k^{1/2}\Lambda^2\int_\Gamma C(\y,\yp(\y))^2\rho(\y)d\y.\]
Letting $C^2  =\int_\Gamma C(\y,\yp(\y))^2\rho(\y)d\y$, we get the last term in (\ref{ess:DeterministicPart}). The first three terms of (\ref{ess:DeterministicPart}) can also be easily computed by integrating the first three terms of (\ref{ess:DeterministicL2Norm}). We will get the same expressions for the constants $C$ as above.
\end{proof}
\begin{rema}
Note that due to the way that the POD method works, the constant $\Lambda$ is so small that the $k^{1/4}$ term has a very little effect on the error.
\end{rema}
Combining (\ref{ess:StochasticPart}) and (\ref{ess:DeterministicPart}), we will finally get the following total error estimate.
\begin{theo}
Under assumption (\ref{ass:OnF}) and the Lipschitz continuity (see remark \ref{LipschitzAssumtion}) assumption , there exist positive constants $C$ and $\Lambda$ that are independent of $h, d, k, \eta$ and $\mathbf{p}$, and there exist constants $b_n, n=1,\ldots,r,$ such that
\begin{eqnarray}
\vn{u^m - u_{d,\mathbf{p}}^m}_{L^2(D)\otimes L_\rho ^2(\Gamma)} \leq C\eta + Ch^{s+1}+Ck+Ck^{1/4}\Lambda + C\sum_{n=1}^r\beta_n(p_n)\exp({-b_np_n^{\theta_n}}),
\end{eqnarray}
where $\theta_n, \beta_n$ and $p_n$ are the same as the ones in theorem \ref{thm:StochasticPart}.
\end{theo}
\begin{rema}
In some cases, one might be interested in estimating the expectation error, \ie $\vn{\mathbb{E}[u^m-u_{d,\mathbf{p}}^m]}_{L^2(D)}$. This can be easily achieved by observing that:
\begin{eqnarray}
\vn{\mathbb{E}[u^m-u_{d,\mathbf{p}}^m]}_{L^2(D)}^2 &=& \int_D\left[\int_\Gamma [u^m(\x,\y)-u_{d,\mathbf{p}}^m(\x,\y)]\rho(\y)d\y \right]^2d\x\nonumber\\
&\leq & \int_D\left[\int_\Gamma [u^m(\x,\y)-u_{d,\mathbf{p}}^m(\x,\y)]^2\rho(\y)d\y \int_\Gamma \rho(\y)d\y \right]d\x\nonumber\\
&= & \int_\Gamma\left[ \int_D [u^m(\x,\y)-u_{d,\mathbf{p}}^m(\x,\y)]^2d\x\right]\rho(\y)d\y\nonumber\\
& = &\vn{u^m - u_{d,\mathbf{p}}^m}_{L^2(D)\otimes L_\rho ^2(\Gamma)}.
\end{eqnarray}
\end{rema}
\section{Numerical experiments}
In this section, we provide a computational example to illustrate the advantages of multi-fidelity stochastic collocation method. Specifically, we consider problem (\ref{eqn:SParabolic}) with $D = (0,1)^2\subset \mathbb{R}^2$, $T = 1$, and the forcing term being given by:
\[f(t,\x,\omega) = 10 + e^t\sum_{n=1}^r\y_n(\omega)\sin (n\pi x).\]
The real-valued random variables $y_n, n=1,\ldots,r$, are supposed to be independent and have uniform distributions $U(0,1)$. In the followings, we let $r=4$. We employ the sparse grid stochastic collocation method introduced in remark \ref{rem:SparseGrid} with sparse grid level $q = 8$. We use the Clenshaw-Curtis abscissas (see \cite{clenshaw1960method}) as collocation points. These abscissas are the extrema of Chebyshev polynomials. We divide the spatial domain $D$ into $32\times 32$ small squares with side length $\Delta x = \Delta y = 1/32$, and then we connect the diagonals of the squares to divide each square into two triangles. These triangles consist the triangulation $\mathfrak{T}_h$, with $h = \sqrt{2}/32$. Take $k=0.1$ as the time step increment. We use all of the time steps to form the snapshots. We employ $6$ POD basis functions. In the following, we compare the solution resulting from a regular isotropic sparse grid stochastic collocation method which only uses the finite element method, with the hybrid multi-fidelity method proposed in this paper which employs both finite element and POD methods. In figure \ref{fig:Fig1}, we compare the expected values resulting from the multi-fidelity method and a regular sparse grid stochastic collocation method. We take $\eta = 0.1$. Recall that for each $\y \in \Gamma$ our method searches the $\eta$ neighbourhood of $\y$ to check whether for some $\yp \in B_\eta(\y)$ problem (\ref{eqn:weak3}) is already solved. If a nearby problem (at~$\yp$) is found to be solved by finite element method, our algorithm uses this information to create POD basis functions and solves problem (\ref{eqn:weak3}) at $\y$ using Galerkin-POD method which is computationally much cheaper than finite element. Moreover, figure \ref{fig:Fig2} compares variances of solutions resulting from the two methods. 

\begin{figure}[ht]
\centering
\begin{minipage}{.5\textwidth}
  \centering
  \psfig{figure=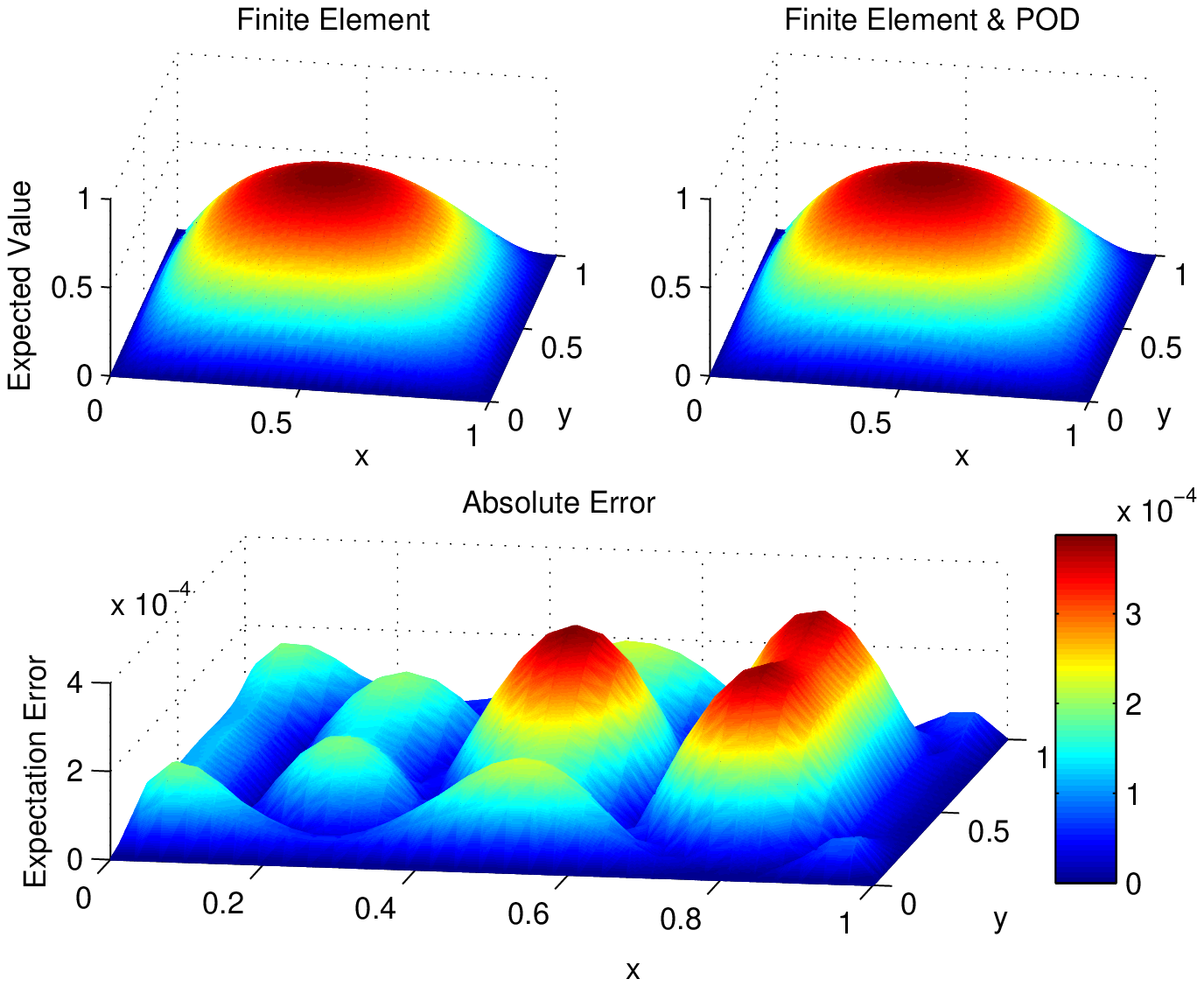,height=2.6in}
  \captionof{figure}{Comparison of expected values (bottom)\\ resulting from a regular sparse grid method (top left)\\ and the multi-fidelity method with $\eta = 0.1$ (top right).}
  \label{fig:Fig1}
\end{minipage}%
\begin{minipage}{.5\textwidth}
  \centering
  \psfig{figure=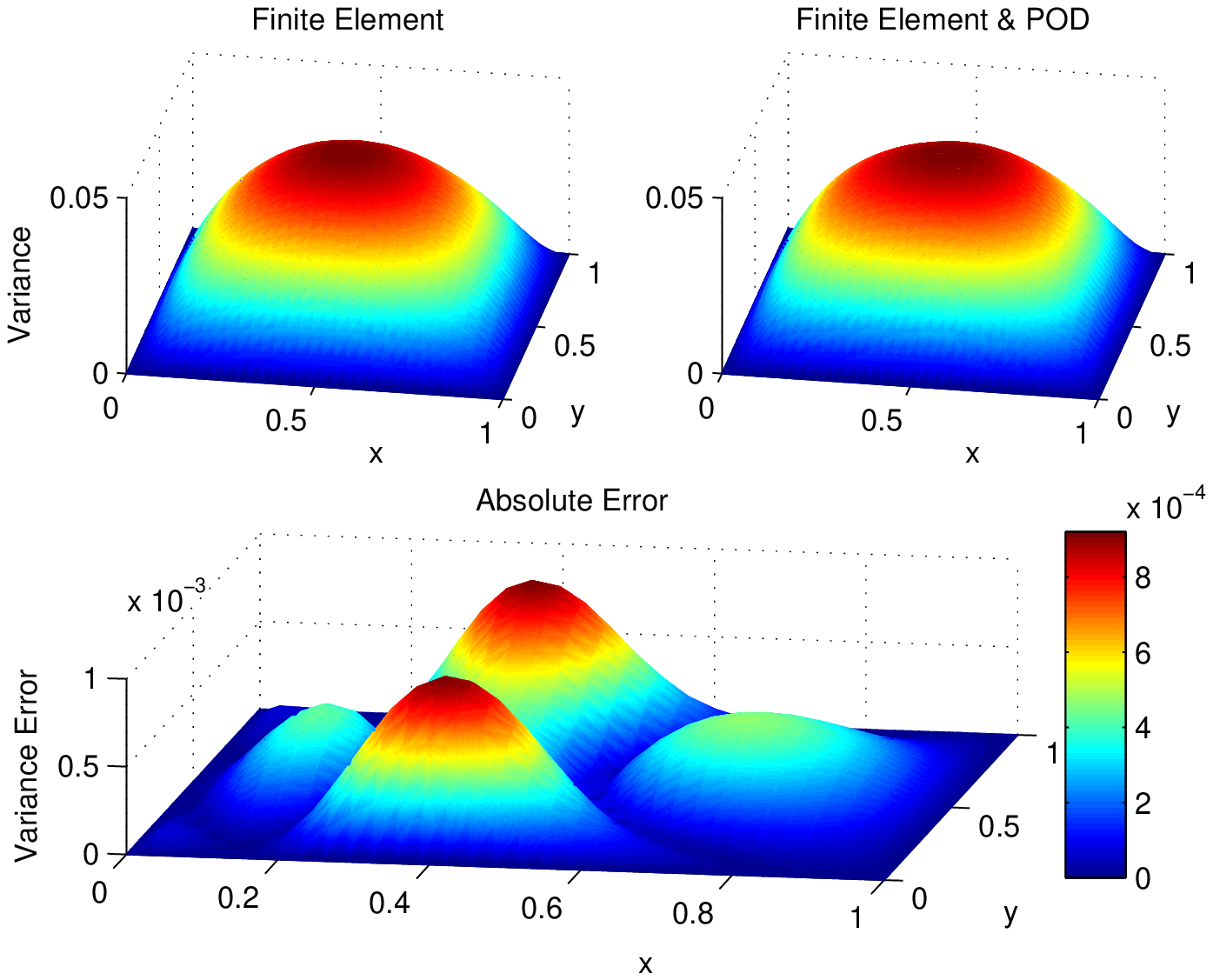,height=2.6in}
  \captionof{figure}{Comparison of variances of solutions (bottom)\\ resulting from a regular sparse grid method (top left)\\ and the multi-fidelity method with $\eta = 0.1$ (top right).}
  \label{fig:Fig2}
\end{minipage}
\end{figure}

Figures \ref{fig:Fig3} and \ref{fig:Fig4}, show the convergence patterns of expectations and variances of solutions with regard to $\eta$, respectively. These results validate our theoretical estimates of previous sections. We are actually comparing our multi-fidelity method with a regular sparse grid stochastic method. Note that for small enough $\eta$ (less than the shortest distance between the collocation points) we get the regular sparse grid method back. Therefore the error is zero for such a small $\eta$.\\

\begin{figure}[ht]
\centering
\begin{minipage}{.5\textwidth}
  \centering
  \psfig{figure=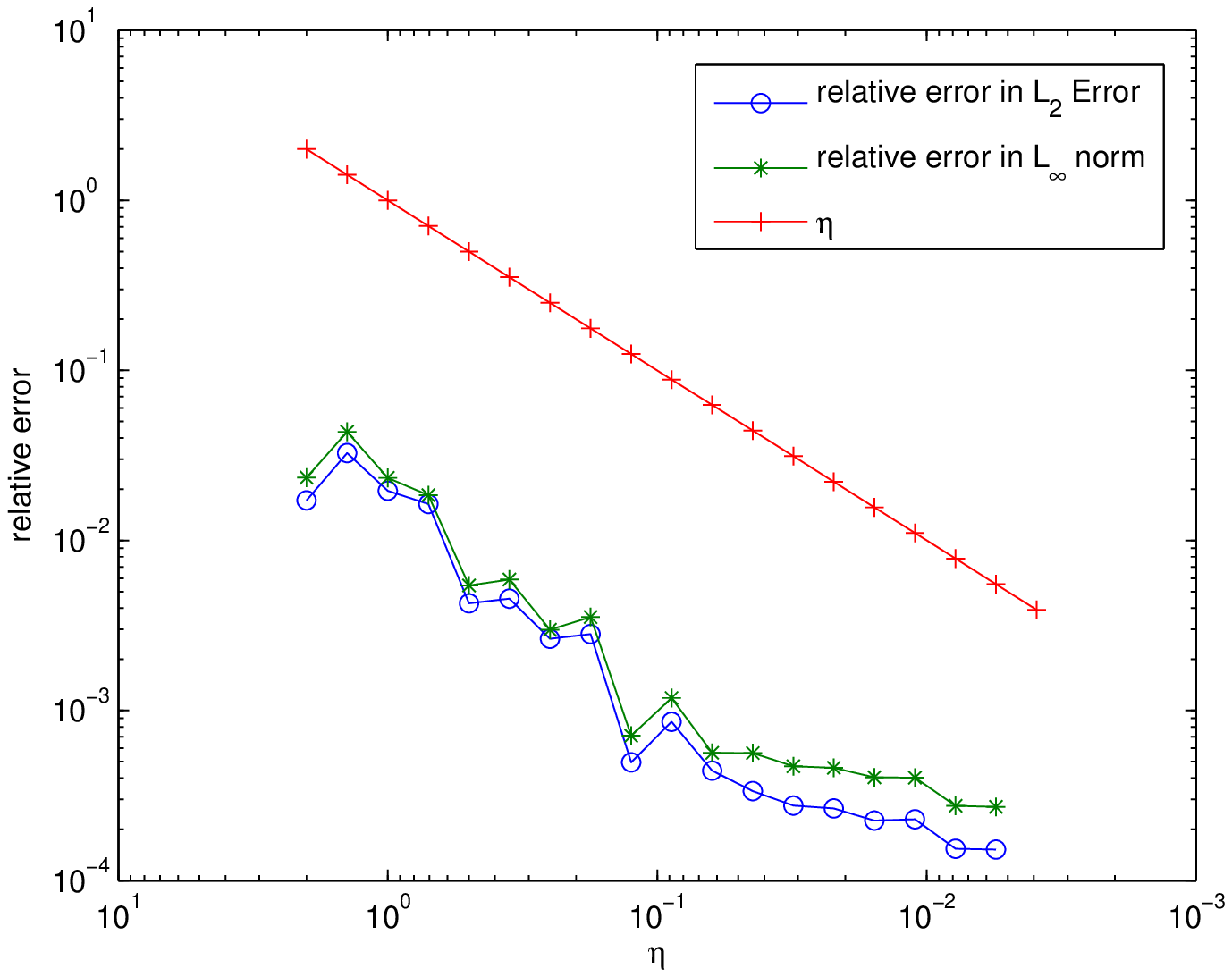,height=2.65in}
  \captionof{figure}{Convergence pattern of expected values of \\solutions with respect to $\eta$.}
  \label{fig:Fig3}
\end{minipage}%
\begin{minipage}{.5\textwidth}
  \centering
  \psfig{figure=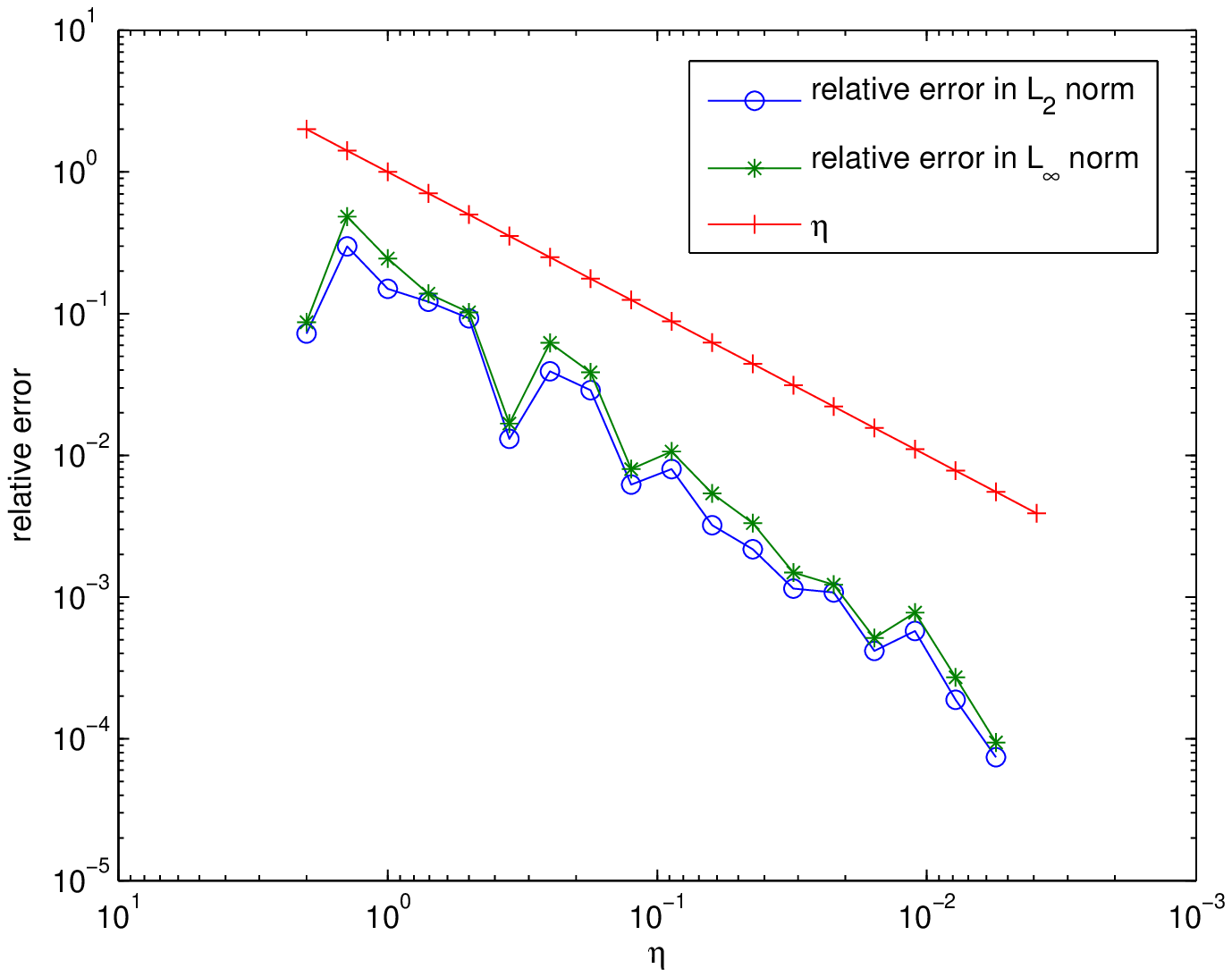,height=2.65in}
  \captionof{figure}{Convergence pattern of variances of solutions\\ with respect to $\eta$.}
  \label{fig:Fig4}
\end{minipage}
\end{figure}

\begin{figure}[h!]
\centering{ \psfig{figure=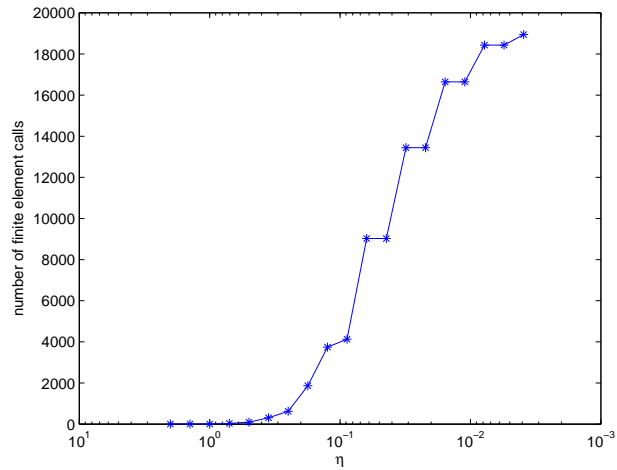,height=2.65in}}
\caption{The number of times that the finite element code is employed as a function of $\eta$.}
\label{fig:Fig5}
\end{figure}

Figure \ref{fig:Fig5} demonstrates how the number of times that the finite element code is employed increases with respect to a decrease in $\eta$.\\

Table \ref{tab:RelativeErrors}, summarizes the results when $\eta = 0.1$. In this case, the number of times that the finite element code is utilized by the multi-fidelity method is $3745$. Compared it to $18946$, the number of times that a regular sparse grid calls the finite element code.\\

\begin{table}[h!]
\begin{center}\caption{Relative errors when $\eta = 0.1$.}
\label{tab:RelativeErrors}
\begin{tabular}{|c|c|c|c|}
\hline 
• & Relative error in $L_2$ norm & Relative error in $L_\infty$ norm \\ 
\hline 
Expected value  & $3.6\times 10^{-4}$ & $4.8\times 10^{-4}$ \\ 
\hline 
Variance & $1.2\times 10^{-2}$ & $2.0\times 10^{-2}$ \\ 
\hline 
\end{tabular} 
\end{center}
\end{table}

Table \ref{tab:Summary} is just another way of presenting the data depicted in figures \ref{fig:Fig3},\ref{fig:Fig4}, and \ref{fig:Fig5}.

\begin{table}[h!]
\begin{center}\caption{Relative errors and the number of times that the finite element code is employed for different values of $\eta$.}
\label{tab:Summary}
\begin{tabular}{|c|c|c|c|c|c|}
\hline 
$\eta$ & $\#$ FE calls & Expectation $L_2$ error & Expectation $L_\infty$ error & Variance $L_2$ error & Variance $L_\infty$ error \\ 
\hline 
4 &	1 &	1.72E-02 &	2.34E-02 &	7.25E-02 &	8.72E-02\\
\hline
2 &	3 &	3.27E-02 &	4.35E-02 &	2.99E-01 &	4.84E-01\\
\hline
1 &	5 &	1.95E-02 &	2.33E-02 &	1.50E-01 &	2.45E-01\\
\hline
$1/2$ &	36 &	1.63E-02 &	1.85E-02 &	1.21E-01 &	1.38E-01\\
\hline
$(1/2)^2$ &	92	 & 4.26E-03 &	5.43E-03 &	9.27E-02 &	1.02E-01\\
\hline
$(1/2)^3$ &	306 &	4.55E-03 &	5.89E-03 &	1.31E-02 &	1.68E-02\\
\hline
$(1/2)^4$ &	621 &	2.64E-03 &	2.98E-03 &	3.91E-02 &	6.21E-02\\
\hline
$(1/2)^5$ &	1866	 & 2.81E-03 &	3.55E-03 &	2.88E-02 &	3.86E-02\\
\hline
$(1/2)^6$ &	3743 &	4.96E-04 &	7.09E-04 &	6.23E-03 &	8.00E-03\\
\hline
$(1/2)^7$ &	4129	 & 8.58E-04 &	1.19E-03 &	8.00E-03 &	1.07E-02\\
\hline
$(1/2)^8$ &	9026	 & 4.42E-04 &	5.63E-04 &	3.22E-03 &	5.39E-03\\
\hline
$(1/2)^9$ &	9026	 & 3.35E-04 &	5.61E-04 &	2.18E-03 &	3.33E-03\\
\hline
$(1/2)^{10}$ &	13442 &	2.76E-04 &	4.69E-04 &	1.15E-03 &	1.49E-03\\
\hline
$(1/2)^{11}$	 & 13442 &	2.65E-04 &	4.59E-04 &	1.08E-03 &	1.22E-03\\
\hline
$(1/2)^{12}$	 & 16642 &	2.25E-04 &	4.04E-04 &	4.18E-04 &	5.15E-04\\
\hline
$(1/2)^{13}$	 & 16642 &	2.29E-04 &	4.02E-04 &	5.75E-04 &	7.78E-04\\
\hline
$(1/2)^{14}$	 & 18434 &	1.54E-04 &	2.75E-04 &	1.89E-04 &	2.71E-04\\
\hline
$(1/2)^{15}$ &	18434 &	1.52E-04 &	2.71E-04 &	7.42E-05 &	9.43E-05\\
\hline
$(1/2)^{16}$ &	18946 &	0 &	0 &	0 &	0\\
\hline
\end{tabular} 
\end{center}
\end{table}
\begin{rema}
The method proposed in this work is a generalization of the one introduced in \cite{maziar1}. This method with some slight improvements using sensitivity analysis of POD basis functions is applied to the Stochastic Burgers equation driven by Brownian motion in \cite{maziar3}. Similar performances are achieved in these papers.
\end{rema}
\section{Concluding remarks}
In this paper, we have proposed a method to enhance the performance of stochastic collocation methods using proper orthogonal decomposition. We have carried out detailed error analyses of the proposed multi-fidelity stochastic collocation methods for parabolic partial differential equations with random forcing terms. We illustrated and supported our theoretical analyses with a numerical example. The analysis of this paper can be simply generalized to parabolic partial differential equations with random initial conditions and random coefficients. Our method only requires a well-posedness argument of the corresponding deterministic equations. Future works in this area can include applications of this method to partial differential equations in fluid mechanics, and proving error estimates for these equations.

\acknowledgements

The authors thank George Karniadakis, Dongbin Xiu, Alireza Doostan, Sergey Lototsky and Peter Kloeden for their valuable comments during the ICERM uncertainty quantification workshop at Brown university. We also thank Karen Willcox for her comments during her visit to George Mason University.

\bibliographystyle{IJ4UQ_Bibliography_Style}

\bibliography{References}
\end{document}